\documentclass[final, nomarks]{dmtcs-episciences}

\usepackage[utf8]{inputenc}
\usepackage{subfigure}
\RequirePackage{amsmath,amsthm,amssymb}
\usepackage{tikz}
\usepackage{pdflscape}
\usepackage{biblatex}

\newtheorem{thm}{Theorem}
\newtheorem{lem}[thm]{Lemma}
\newtheorem{prop}[thm]{Proposition}
\newtheorem{corol}[thm]{Corollary}

\newcommand{\I}{\mathcal{I}}
\renewcommand{\min}{\mathbf{min}}
\renewcommand{\max}{\mathbf{max}}
\newcommand{\Zeros}{\mathbf{Zeros}}
\newcommand{\child}{\mathbf{child}}
\newcommand{\Sites}{\mathbf{Sites}}
\newcommand{\Vals}{\mathbf{Vals}}
\newcommand{\push}{\mathbf{push}}
\newcommand{\gray}{\textcolor{gray}}
\newcommand{\Ustirling}[2]{\genfrac{[}{]}{0pt}{}{#1}{#2}}
\let\oldqedhere\qedhere
\renewcommand{\qedhere}{\pushQED{\qed}\oldqedhere}

\addbibresource{biblio.bib}

\author[Benjamin Testart]{Benjamin Testart\affiliationmark{1}}
\title{Generating trees growing on the left for pattern-avoiding inversion sequences}

\affiliation{Université de Lorraine, CNRS, Inria, LORIA, F-54000 Nancy, France}
\keywords{pattern avoidance, inversion sequences, enumeration, generating trees, generating functions}

\begin{document}
\publicationdata{vol. 27:1, Permutation Patterns 2024}{2025}{10}{10.46298/dmtcs.14716}{2024-11-11; 2024-11-11; 2025-08-12}{2025-09-04}
\maketitle
\begin{abstract} \medskip
This work concerns a construction of pattern-avoiding inversion sequences from right to left we call the \emph{generating tree growing on the left}. We first apply this construction to inversion sequences avoiding 201 and 210, resulting in a new way of computing their generating function. We then use a slightly modified construction to compute the generating function of inversion sequences avoiding 010 and 102, which was only conjectured before. These generating functions are algebraic in both instances. We end by discussing how the generating tree growing on the left can be applied in a more general setting.
\end{abstract}

\def\endproof{\medskip}
\renewcommand{\thefootnote}{\arabic{footnote}}
\section{Introduction}

\subsection{Basic definitions}
For all $n \geqslant 0$, let $\I_n$ be the set of \emph{inversion sequences} \cite{Mansour_Shattuck_2015, Corteel_Martinez_Savage_Weselcouch_2016} of size $n$, that is, the set of sequences $\sigma = (\sigma_1, \dots, \sigma_n)$ such that $\sigma_i \in \{0, \dots, i-1\}$ for all $i \in \{1, \dots, n\}$. Let $\I = \coprod_{n \geqslant 0} \I_n$ be the set of all inversion sequences.

We define a \emph{pattern} to be a finite, nonempty sequence of nonnegative integers $\rho$ whose set of values forms an interval of integers starting at 0. In this work, we exclusively study patterns of length 3, which we denote $\rho_1\rho_2\rho_3$ instead of $(\rho_1,\rho_2,\rho_3)$. For instance, $102$ and $010$ are patterns, but $022$ and $123$ are not.

We say that an inversion sequence $\sigma \in \I_n$ \emph{contains} a pattern $\rho$ of length $k \leqslant n$ if there exists a subsequence of $k$ entries of $\sigma$ whose values appear in the same order as the values of $\rho$. Such a subsequence is called an \emph{occurrence} of $\rho$. An inversion sequence \emph{avoids} the pattern $\rho$ if it does not contain $\rho$. For instance, the sequence $(0,1,0,3,0,2,3)$ avoids the pattern 110, but it contains the pattern 021 since the subsequence $(0,3,2)$ is an occurrence of 021. If $P$ is a set of patterns, we denote by $\I(P)$ the set of inversion sequences avoiding all patterns in $P$, and by $\I_n(P)$ its subset of sequences of size $n$.

\subsection{Context and summary of results}
Pattern-avoiding inversion sequences were introduced independently in \cite{Mansour_Shattuck_2015} and \cite{Corteel_Martinez_Savage_Weselcouch_2016}, by analogy with pattern-avoiding permutations. Afterwards, many more articles studied inversion sequences avoiding patterns of length three, see \cite{Martinez_Savage_2018, Beaton_Bouvel_Guerrini_Rinaldi_2019, Yan_Lin_2020, Kotsireas_Mansour_Yildirim_2024, Chen_Lin, Pantone201+210, Testart2024}, among others.

Our work is concerned with two distinct classes of pattern-avoiding inversion sequences: $\I(201,210)$ and $\I(010,102)$.
The class $\I(201,210)$ is counted by sequence A212198 in \cite{OEIS}. It has an algebraic generating function, which was found in both \cite{Chen_Lin} and \cite{Pantone201+210}, using two different constructions.
Chen and Lin \cite{Chen_Lin} use a decomposition around the rightmost saturated\footnote{In an inversion sequence $\sigma$, the entry $\sigma_i$ is said to be ``saturated" if $\sigma_i = i-1$ (since $i-1$ is the largest value which could appear at position $i$).} entry of inversion sequences, with one parameter counting the number of saturated entries.
The construction used by Pantone \cite{Pantone201+210} is a generating tree in one integer parameter and two boolean parameters, and it involves some ``commitments" to future decisions.

The class $\I(010,102)$ is counted by sequence A374553 in \cite{OEIS}. This sequence was computed in polynomial time in \cite{Testart2024} using a decomposition involving $\{010,102\}$-avoiding words, and it was conjectured to have an algebraic generating function in \cite{Pantone201+210} using the software \cite{guessfunc}.

In Section \ref{sectionGT}, we describe a construction for inversion sequences, which we call the ``generating tree growing on the left". This construction first appeared in \cite{Corteel_Martinez_Savage_Weselcouch_2016} to enumerate the classes $\I(101)$ and $\I(110)$. A modified construction based on this one also appears in \cite{Lin_Fu_2021} to enumerate inversion sequences avoiding the vincular pattern $\underline{12}0$. 

As it is often the case with generating trees, our general approach in this paper is to first look for a simple succession rule describing the tree (up to isomorphism), then turn this rule into a functional equation for a multivariate generating function, then solve this equation with the kernel method \cite{GFforGT}.

In Section \ref{201+210}, we consider the generating tree growing on the left for inversion sequences avoiding 201 and 210. Our succession rule corresponds to a functional equation which contains two evaluations $A(t,1)$ and $A(t,2)$ of a bivariate series $A$. This is quite unusual, and it occurs because the succession rule produces labels with multiplicities that are powers of 2. Regardless, this equation can still be solved with the kernel method, resulting in a new and simpler way of computing the generating function of $\I(201,210)$.

In Section \ref{010+102}, we modify the generating tree growing on the left construction to apply it to inversion sequences avoiding 010 and 102. In this case, we obtain a functional equation involving two catalytic variables, which can be solved by simply applying the kernel method twice. This allows us to compute the generating function of $\I(010,102)$, confirming a conjecture of \cite{Pantone201+210}.

In Section \ref{secConclusion}, we briefly explore how widely the generating tree growing on the left could be applied to other similar problems. To that end, we present some succession rules for inversion sequences avoiding any pattern of length 3.

\subsection{Notation}
For any integers $a,b \in \mathbb Z$, we denote the integer interval $[a,b] = \{k \in \mathbb Z \; : \; a \leqslant k \leqslant b\}$.
We denote by $\varepsilon$ the \emph{empty sequence}, that is, the only sequence of size $0$.
We define $\chi$ to be the characteristic function of a statement, that is $\chi(\text{S}) = \begin{cases}
    0 & \text{if \; S is false}, \\
    1 & \text{if \; S is true}.
\end{cases}$

For all $n \geqslant 0$ and $\sigma \in \I_n$, let
\begin{itemize}
    \item $|\sigma| = n$ be the \emph{size} of $\sigma$,
    \item $\Vals(\sigma) = \{\sigma_i \; : \; i \in [1,n]\}$ be the \emph{set of values} of $\sigma$,
    \item $\min(\sigma) = \min(\Vals(\sigma))$ be the \emph{minimum} of $\sigma$, with the convention $\min(\varepsilon) = +\infty$,
    \item $\max(\sigma) = \max(\Vals(\sigma))$ be the \emph{maximum} of $\sigma$, with the convention $\max(\varepsilon) = -1$,
    \item $\Zeros(\sigma) = \{i \in [1,n] \, : \, \sigma_i = 0\}$ be the set of (positions of) zeros of $\sigma$.
\end{itemize}

\section{Generating trees for inversion sequences growing on the left} \label{sectionGT}
A \emph{generating tree} \cite{West1, West2} is a rooted, labelled tree such that the label of each node determines its number of children, and their labels. A generating tree can be defined by a \emph{succession rule}
$$\Omega = \left \{ \begin{array}{rclll} 
    a \\
    \ell & \leadsto & i^{m(\ell,i)} & \text{for} & i \in C(\ell) \\
\end{array} \right.$$
composed of an \emph{axiom} $a$, which labels the root of the tree, and a \emph{production} which associates to each label $\ell$ the labels of the children of any node labelled $\ell$ in the tree. In the rule $\Omega$ above, each node labelled $\ell$ has $m(\ell,i)$ children labelled $i$, for each $i$ in a finite set of labels $C(\ell)$.

Generating trees are used to solve enumeration problems by identifying combinatorial objects with the nodes of a tree, so that each object of size $n$ corresponds to a node at level $n$ in the tree. This is useful when many nodes are roots of isomorphic subtrees, as the combinatorial objects corresponding to these nodes may all be given the same label.
A succession rule for a generating tree then induces a recurrence relation on the number of objects of each size (which usually involves some parameters corresponding to the labels of the tree), or equivalently a functional equation for a (multivariate) generating function.
For a more thorough introduction to generating trees in the context of pattern-avoiding inversion sequences, see \cite[Section 2.1]{Testart2024}.

We define the generating tree for inversion sequences \emph{growing on the left} as the only generating tree which has one node labelled by each inversion sequence (we now identify the nodes with their labels), whose root is the empty sequence $\varepsilon$, and in which the parent of each nonempty sequence is obtained by deleting its leftmost entry (which is always a zero), and subtracting 1 from all its positive entries.
In this tree, the children of an inversion sequence are obtained by
\begin{enumerate}
    \item first adding 1 to every positive entry in the sequence,
    \item then adding 1 to some (or none) of the zero entries,
    \item lastly, inserting a zero at the leftmost position.
\end{enumerate}
Observe that this construction is indeed a generating tree (it produces only inversion sequences and, starting from the empty sequence $\varepsilon$, it produces every inversion sequence exactly once). Note that steps 1 and 3 are deterministic, whereas step 2 involves choosing a subset of the zero entries. For any inversion sequence $\sigma \in \I$, and any $Z \subseteq \Zeros(\sigma)$, we denote
$$\child(\sigma, Z) = 0 \cdot (\sigma_i + \chi(\sigma_i > 0) + \chi(i \in Z))_{i \in [1,|\sigma|]}$$
the child of $\sigma$ corresponding to the subset of zeros $Z$ to which 1 is added. It should be apparent that $\sigma$ has $2^{|\Zeros(\sigma)|}$ children.
For instance, the sequence $\sigma = (0,1,1,3,0,2,5)$ satisfies 
$\Zeros(\sigma) = \{1,5\}$, and its four children are
\begin{align*}
    \child(\sigma, \emptyset) &= (0,0,2,2,4,0,3,6), \\
    \child(\sigma, \{1\}) &= (0,1,2,2,4,0,3,6), \\
    \child(\sigma, \{5\}) &= (0,0,2,2,4,1,3,6), \\
    \child(\sigma, \{1,5\}) &= (0,1,2,2,4,1,3,6).
\end{align*}
Up to a change of labels, the generating tree for inversion sequences growing on the left is described by the succession rule $\Omega_{\text{Left}}$ below, where the new label $k$ corresponds to the number of zeros in a sequence. This generating tree is represented with both labellings on Figure \ref{GT_left}.
$$\Omega_{\text{Left}} = \left \{ \begin{array}{rclll} 
    (0) \\
    (k) & \leadsto & (i+1)^{\binom{k}{i}} & \text{for} & i \in [0,k] \\
\end{array} \right.$$

\begin{figure}[ht]
\centering
\begin{tikzpicture}[level distance=25mm]
\tikzstyle{level 2}=[sibling distance=65mm]
\tikzstyle{level 3}=[sibling distance=20mm]
\node[align=center]{\large $\varepsilon$ \\ $(0)$}
child{node[align=center]{0 \\ $(1)$}   
    child{node[align=center]{00 \\ $(2)$}
        child{node[align=center]{000 \\ $(3)$}}
        child{node[align=center]{010 \\ $(2)$}}
        child{node[align=center]{001 \\ $(2)$}}
        child{node[align=center]{011 \\ $(1)$}}
    }
    child{node[align=center]{01 \\ $(1)$}
        child{node[align=center]{002 \\ $(2)$}}
        child{node[align=center]{012 \\ $(1)$}}
    }
};
\end{tikzpicture}
\caption{First four levels of the generating tree growing on the left for all inversion sequences.}  
\label{GT_left}
\end{figure}

The simplest construction of inversion sequences is the generating tree growing sequences on the right, used in e.g. \cite{Kotsireas_Mansour_Yildirim_2024} or \cite[Section 2]{Testart2024}.
In comparison, growing sequences on the left is not the most practical way to construct the set of inversion sequences (it is not even obvious that the tree described by $\Omega_{\text{Left}}$ has $n!$ nodes at each level $n$).
However, this construction becomes simpler for some classes of pattern-avoiding inversion sequences. For instance, \cite{Corteel_Martinez_Savage_Weselcouch_2016} proves that the generating trees growing on the left for the classes $\I(101)$ and $\I(110)$ are both described by the succession rule
$$\Omega_{\text{pCat}} = \left \{ \begin{array}{rclll}
    (0) \\
    (k) & \leadsto & (k+1) \\
    && (i)^{i} & \text{for} & i \in [1,k]. \\
\end{array} \right.$$

The generating tree growing on the left for inversion sequences lacks a property that is typically desired when working with objects that avoid patterns: it does not automatically restrict to a generating tree for families of pattern-avoiding inversion sequences. Specifically, restricting the tree to keep only sequences which avoid some patterns may disconnect the tree. This occurs because an inversion sequence which contains a pattern $\rho$ may have children which avoid $\rho$. For instance, the sequence $(0,0,2,1)$ avoids the pattern 010, but its parent sequence is $(0,1,0)$.
We now characterize the patterns for which this restriction can be done, in Propositions \ref{PropNecessaryPatterns} and \ref{PropSufficientPatterns}. First, we show that this restriction is possible when avoiding a pattern which contains only one zero, or two zeros that are both at the beginning of the pattern.
\begin{prop} \label{PropNecessaryPatterns}
    Let $\rho$ be a pattern of length $k \geqslant 1$ which either contains a single zero, or contains exactly two zeros and begins with 00. Let $\sigma$ be an inversion sequence which contains an occurrence $(\sigma_{q_i})_{i \in [1,k]}$ of $\rho$, and let $\tau$ be a child of $\sigma$ in the generating tree growing on the left. Then $\tau$ contains the pattern $\rho$.
\end{prop}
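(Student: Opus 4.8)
The plan is to follow what the explicit formula $\child(\sigma,Z)=0\cdot(\sigma_i+\chi(\sigma_i>0)+\chi(i\in Z))_{i\in[1,|\sigma|]}$ does to the given occurrence $(\sigma_{q_i})_{i\in[1,k]}$, and, whenever the resulting subsequence fails to be an occurrence of $\rho$, to patch it using the zero that $\tau$ inserts at position $1$. Fix $Z\subseteq\Zeros(\sigma)$ with $\tau=\child(\sigma,Z)$, so the entry of $\tau$ at position $q_i+1$ equals $\sigma_{q_i}+\chi(\sigma_{q_i}>0)+\chi(q_i\in Z)$. The observation used throughout is that whenever $\sigma_{q_i}>0$ we have $q_i\notin\Zeros(\sigma)$, hence $q_i\notin Z$ and $\tau_{q_i+1}=\sigma_{q_i}+1$; so on the set of indices $i$ with $\sigma_{q_i}>0$, the word $(\tau_{q_i+1})$ is obtained from $(\sigma_{q_i})$ by adding $1$ to every entry and therefore has the same relative order.

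First I would treat the case where $\rho$ contains a single zero, say $\rho_j=0$, which is then the strict minimum value of $\rho$. Then $\sigma_{q_i}>\sigma_{q_j}\geqslant 0$ for every $i\neq j$, so $\sigma_{q_i}\geqslant 1$ and, by the observation, $\tau_{q_i+1}=\sigma_{q_i}+1\geqslant 2$, carrying the relative order of $(\rho_i)_{i\neq j}$. If $\sigma_{q_j}>0$ then $\tau_{q_j+1}=\sigma_{q_j}+1$ too, so $(\tau_{q_i+1})_{i\in[1,k]}$ is $(\sigma_{q_i})_{i\in[1,k]}$ with $1$ added to each entry --- an occurrence of $\rho$. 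If $\sigma_{q_j}=0$ then $\tau_{q_j+1}=\chi(q_j\in Z)\in\{0,1\}$, which is still strictly below every other $\tau_{q_i+1}\geqslant 2$, so $(\tau_{q_i+1})_{i\in[1,k]}$ is again an occurrence of $\rho$. In either case $\tau$ contains $\rho$, without using the inserted zero.

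Then I would treat the case where $\rho$ begins with $00$ and has exactly two zeros, so $\rho_1=\rho_2=0$ is the strict minimum of $\rho$; hence $\sigma_{q_1}=\sigma_{q_2}$ and $\sigma_{q_i}>\sigma_{q_1}$ for $i\geqslant 3$. By the observation, $\tau_{q_i+1}=\sigma_{q_i}+1$ for $i\geqslant 3$, carrying the relative order of $(\rho_3,\dots,\rho_k)$, and a short check (according as $\sigma_{q_1}=0$ or $\sigma_{q_1}>0$) shows each such value exceeds both $\tau_{q_1+1}$ and $\tau_{q_2+1}$. If $\chi(q_1\in Z)=\chi(q_2\in Z)$ then $\tau_{q_1+1}=\tau_{q_2+1}$ and $(\tau_{q_i+1})_{i\in[1,k]}$ is an occurrence of $\rho$. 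Otherwise exactly one of $q_1,q_2$ lies in $Z$, which forces $\sigma_{q_1}=\sigma_{q_2}=0$; letting $q_j$ with $j\in\{1,2\}$ be the one not in $Z$, we get $\tau_{q_j+1}=0$, and then --- using the zero $\tau$ inserts at position $1$ --- the positions $1<q_j+1<q_3+1<\cdots<q_k+1$ of $\tau$ carry the values $0,\,0,\,\sigma_{q_3}+1,\dots,\sigma_{q_k}+1$, an occurrence of $\rho=0\,0\,\rho_3\cdots\rho_k$.

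The bulk of the work is the routine bookkeeping of the $\chi$-terms and the verification that relative orders are preserved; the only genuinely delicate point is the last sub-case, where $Z$ separates the two leading zeros of the occurrence. This is exactly the configuration the hypothesis ``begins with $00$'' is there to handle: it guarantees there is room for a zero before the rest of the occurrence, so the zero $\tau$ inserts at the front can stand in for whichever leading entry got bumped up to $1$. I expect that sub-case to be the crux; everything else is immediate from the formula for $\child(\sigma,Z)$.
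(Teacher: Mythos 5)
Your proof is correct and follows essentially the same approach as the paper's: push the given occurrence through $\child(\sigma,Z)$, observe it survives except when the two leading zeros of the occurrence are split by $Z$, and in that sub-case repair it using the newly inserted $\tau_1=0$ together with whichever of the two entries stayed at value $0$. The paper merely organizes the case split by the pair $(\tau_{q_1+1},\tau_{q_2+1})$ rather than by membership in $Z$; the argument is the same.
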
 
\begin{proof}
     If $\rho$ contains a single zero, then $(\sigma_{q_i})_{i \in [1,k]}$ contains at most one zero. It follows that $(\sigma_{q_i})_{i \in [1,k]}$ is order-isomorphic to $(\tau_{q_i+1})_{i \in [1,k]}$ (every nonzero value is increased by 1, and the hypothetical zero can either increase by 1 or remain a zero), hence $\tau$ contains the pattern $\rho$.

    If $\rho$ contains exactly two zeros and begins with 00, we distinguish some cases depending on the values of $(\sigma_{q_i})_{i \in [1,k]}$.
     \begin{itemize}
         \item If $(\sigma_{q_i})_{i \in [1,k]}$ contains only positive values, then $(\tau_{q_i + 1})_{i \in [1,k]} = (\sigma_{q_i}+1)_{i \in [1,k]}$ is still an occurrence of $\rho$, in $\tau$.
         \item If $(\sigma_{q_i})_{i \in [1,k]}$ contains zero values, i.e. $\sigma_{q_1} = \sigma_{q_2} = 0$, then $\tau_{q_1+1}, \tau_{q_2+1} \in \{0,1\}$.
         \begin{itemize}
             \item If $(\tau_{q_1+1}, \tau_{q_2+1}) \in \{(0,0), (1,1)\}$, then $(\tau_{q_i + 1})_{i \in [1,k]}$ is still an occurrence of $\rho$.
             \item If $(\tau_{q_1+1}, \tau_{q_2+1}) = (0,1)$, then $(\tau_1) \cdot (\tau_{q_i + 1})_{i \in \{1\} \cup [3,k]}$ is an occurrence of $\rho$.
             \item If $(\tau_{q_1+1}, \tau_{q_2+1}) = (1,0)$, then $(\tau_1) \cdot (\tau_{q_i + 1})_{i \in [2,k]}$ is an occurrence of $\rho$. \qedhere
         \end{itemize}
     \end{itemize}
\end{proof}

Proposition \ref{PropNecessaryPatterns} provides sufficient conditions on a pattern $\rho$ to ensure that the complete generating tree growing on the left restricts to a generating tree for the class $\I(\rho)$. We now show that these conditions are also necessary.

\begin{prop} \label{PropSufficientPatterns}
    Let $\rho$ be a pattern of length $k \geqslant 2$ which contains at least two zeros, and either begins with 000, or has a zero to the right of a positive value. Then there exist two inversion sequences $\sigma$ and $\tau$ such that
    \begin{itemize}
        \item $\tau$ is a child of $\sigma$ in the generating tree growing on the left,
        \item $\sigma$ contains $\rho$,
        \item $\tau$ avoids $\rho$.
    \end{itemize}
\end{prop}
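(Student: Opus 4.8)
The plan is to exhibit, for every pattern $\rho$ satisfying the hypotheses, an explicit pair $(\sigma,\tau)$ with the three required properties. I would split these patterns into two disjoint families. If some zero of $\rho$ lies to the right of a positive value, this is \emph{case (B)}. Otherwise every zero of $\rho$ precedes every positive entry, so $\rho = 0^{z}\pi$ where the word $\pi$ has no zero entry; since the hypothesis then forces $\rho$ to begin with $000$, we get $z \geqslant 3$, and this is \emph{case (A)} (here $\pi$ may be empty, in which case $\rho = 0^{z}$). In both cases $\sigma$ will be a short inversion sequence realizing $\rho$ in an obvious way, and $\tau$ one of its children obtained by raising a carefully chosen set of zeros; throughout, $k$ is the length and $m$ the largest value of $\rho$, and $z$ its number of zeros.

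\emph{Case (A).} Take $\sigma = 0^{z}\,1^{m}\,\pi$ (the concatenation of $z$ zeros, $m$ ones, and the word $\pi$). One checks at once that $\sigma$ is an inversion sequence with exactly $z$ zeros, and that its leading block $0^{z}$ followed by its final block $\pi$ is an occurrence of $\rho$. For $\tau$ I would take $\child(\sigma, Z)$ where $Z$ consists of any two of the $z$ leading zeros of $\sigma$; explicitly $\tau = 0\,1\,1\,0^{z-2}\,2^{m}\,(\pi+1)$, where $\pi+1$ is $\pi$ with every entry increased by $1$. Then the value $0$ occurs $z-1$ times in $\tau$, the value $1$ occurs exactly twice, and the set of values of $\tau$ is $\{0,1,\dots,m+1\}$. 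The key step is a counting argument: in an occurrence of $\rho$ in $\tau$, the value matched to the $z \geqslant 3$ leading zeros of $\rho$ must occur at least $z$ times in $\tau$ and must have at least $m$ distinct larger values above it in $\tau$; but $0$ and $1$ each occur fewer than $z$ times, and a value $v \geqslant 2$ has only the $m+1-v \leqslant m-1$ values $v+1,\dots,m+1$ above it. Hence no such occurrence exists and $\tau$ avoids $\rho$.

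\emph{Case (B).} Let $j^{*}$ be the position of the last zero of $\rho$; first observe that $\rho_{j^{*}}$ lies to the right of a positive value of $\rho$ (otherwise $\rho$ would be $0^{j^{*}}$ followed by positive entries, which is case (A)). Take $\sigma = 0^{m}\rho$, an inversion sequence whose last $k$ entries form $\rho$, and $\tau = \child(\sigma, Z)$ where $Z$ is the singleton consisting of the position of $\rho_{j^{*}}$ inside $\sigma$. Then $\tau = 0^{m+1}\rho'$, where $\rho'$ is obtained from $\rho$ by adding $1$ to every positive entry and replacing $\rho_{j^{*}}$ by $1$ (the other zeros of $\rho$ are kept). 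The same counting idea as in case (A) — the value $1$ occurs exactly once in $\tau$, and a value $v \geqslant 2$ has fewer than $m$ larger values in $\tau$ — shows that in any occurrence of $\rho$ in $\tau$ the zeros of $\rho$ must be matched to genuine zeros of $\tau$. Letting $a_{0}$ be the position of the first positive entry of $\rho$, the zeros of $\tau$ form an interval $[1,\,m+a_{0}]$ (the prefix $0^{m+1}$ together with the initial run of zeros of $\rho$) together with a set $B$ of exactly $z - a_{0}$ positions, all greater than $m + a_{0}$. The remaining step is a position argument: in a hypothetical occurrence of $\rho$ in $\tau$, the first positive entry of $\rho$ is matched to a positive position of $\tau$, necessarily greater than $m + a_{0}$; following the order constraints of the occurrence, this forces all $z - a_{0} + 1$ of the zeros of $\rho$ at positions $\geqslant a_{0}$ to be matched inside $B$, which is impossible since $|B| = z - a_{0}$. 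Therefore $\tau$ avoids $\rho$.

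I expect the main obstacle to be this last position argument in case (B): it requires keeping careful track of how the zeros and the positive entries of $\rho$ are forced to interleave inside $\tau$, and of exactly which positions of $\tau$ remain available as zeros once the chosen zero of $\sigma$ has been raised. The remaining verifications — that each $\sigma$ is a valid inversion sequence containing $\rho$, that $\child(\sigma, Z)$ has the stated form, and the multiplicity and distinct‑value counts for $\tau$ — should be routine.
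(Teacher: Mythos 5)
Your proposal is correct and follows essentially the same strategy as the paper: in both cases you exhibit an explicit witness $\sigma$ obtained by prepending a block of small values to $\rho$, take the child that raises two leading zeros (respectively the rightmost zero), and exclude occurrences of $\rho$ in $\tau$ by counting multiplicities, distinct values, and available positions. The differences are only cosmetic --- the paper pads with $(0,0,1,1,\dots,k-1,k-1)$ and $(0)^k$ where you use $0^{z}1^{m}\pi$ and $0^{m}\rho$, and in the second case it counts inversion-bottom zeros and entries of value at least $2$ where you count the zeros of $\tau$ to the right of the first positive entry --- and your verifications go through.
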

\begin{proof}
    If $\rho$ begins with 000, let $\rho' = (\rho_i + k \cdot \chi(\rho_i > 0))_{i \in [3,k]}$ be the sequence obtained by deleting the first two zeros of $\rho$ and adding $k$ to every nonzero value. Let $\sigma = (0,0,1,1, \dots , k-1,k-1) \cdot \rho'$. It is easily checked that $\sigma$ is an inversion sequence which contains the pattern $\rho$. Let $\tau = \child(\sigma, \{1, 2\})$. By construction, $\tau$ has one fewer zero than $\rho$, hence no occurrence of $\rho$ in $\tau$ can contain the value 0. Similarly, every value in $[1,k]$ appears exactly twice in $\tau$, while the value 0 appears at least three times in $\rho$, so any occurrence of $\tau$ in $\rho$ must only contain values greater than $k$. Only the rightmost $k-2$ entries of $\tau$ may have a value greater than $k$; this is not enough to form an occurrence of $\rho$.

    If $\rho$ has a zero to the right of a positive value, let $\sigma = (0)^k \cdot \rho$. It is easy to see that $\sigma$ is an inversion sequence which contains  $\rho$. Let $\tau = \child(\sigma, \{\max(\Zeros(\sigma))\})$. By hypothesis, the rightmost zero of $\sigma$ (or equivalently, $\rho)$ is necessarily an inversion bottom, i.e. there exists an entry of greater value to its left.
    This implies that the number of zeros that are inversion bottoms in $\tau$ is one fewer than in $\rho$, hence no occurrence of $\rho$ in $\tau$ can contain the value 0. Additionally, the value 1 appears only once in $\tau$, while the value 0 appears at least twice in $\rho$, so any occurrence of $\rho$ in $\tau$ must only contain values greater than 1. There are at most $k-2$ entries of $\tau$ having value at least 2; this is not enough to form an occurrence of $\rho$. \qedhere
\end{proof}

By Proposition \ref{PropNecessaryPatterns}, the generating tree growing on the left is well-defined for inversion sequences avoiding the patterns 201 and 210 studied in Section \ref{201+210}, since both patterns contain a single zero. 
Even for problematic patterns like 010, the problem can be worked around by modifying our construction, in some cases. For instance, we describe a generating tree for the class $\I(010,102)$ in Section \ref{010+102}. Some other workarounds for the classes $\I(000)$, $\I(010)$, and $\I(100)$ can be found in Section \ref{secConclusion}.

\section{Inversion sequences avoiding 201 and 210} \label{201+210}

For all $n \geqslant 0$ and $\sigma \in \I_n(201,210)$, let $L(\sigma) = \{i \in [1,n] \, : \, \forall j \leqslant i, \; \sigma_j = 0\}$ be the set of leading zeros of $\sigma$, and let $R(\sigma) = \Zeros(\sigma) \backslash L(\sigma)$. Let $\ell(\sigma) = |L(\sigma)|$ be the number of leading zeros of $\sigma$. If $\ell(\sigma) < n$ (i.e. if $\sigma$ is not constant), then $\sigma_{\ell(\sigma)+1}$ is the value of the first nonzero entry of $\sigma$. Notice that $L(\sigma)$ (resp. $R(\sigma)$) is the set of zeros of $\sigma$ to the left (resp. right) of position $\ell(\sigma)+1$.

\begin{prop} \label{propZeros201+210}
    Let $\sigma \in \I_n(201,210)$, $Z \subseteq \Zeros(\sigma)$, and let $\tau = \child(\sigma, Z) \in \I_{n+1}$. Then $\tau$ avoids 201 and 210 if and only if $Z \cap R(\sigma) \in \{\emptyset, R(\sigma)\}$.
\end{prop}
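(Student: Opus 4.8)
The plan is to prove both directions by pushing occurrences of $201$ and $210$ back and forth across the map $\sigma \mapsto \child(\sigma,Z)$, using its explicit formula. The starting point is the remark that an inversion sequence lies in $\I(201,210)$ exactly when it has no three positions $q_1<q_2<q_3$ with $\sigma_{q_1}>\sigma_{q_2}$, $\sigma_{q_1}>\sigma_{q_3}$ and $\sigma_{q_2}\neq\sigma_{q_3}$: such a triple is an occurrence of $201$ if $\sigma_{q_2}<\sigma_{q_3}$ and of $210$ if $\sigma_{q_2}>\sigma_{q_3}$. I will use this merged condition throughout. Note also that since the prepended entry $\tau_1=0$ is minimal, it cannot be the (strict) maximum of an occurrence, so any occurrence of $201$ or $210$ in $\tau$ uses three positions of the form $q_1+1<q_2+1<q_3+1$ with $1\leqslant q_1<q_2<q_3\leqslant n$, and its first entry $\tau_{q_1+1}$ is $\geqslant 2$.

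For the ``only if'' direction I would produce an explicit pattern. Suppose $Z\cap R(\sigma)$ is neither $\emptyset$ nor $R(\sigma)$; pick $c\in Z\cap R(\sigma)$ and $c'\in R(\sigma)\setminus Z$, and set $a=\ell(\sigma)+1$. Since $R(\sigma)\neq\emptyset$ we have $\ell(\sigma)<n$, so $\sigma_a\geqslant 1$; moreover $R(\sigma)$ consists of zeros to the right of position $a$, so $a<c$ and $a<c'$. A direct computation gives $\tau_{a+1}=\sigma_a+1\geqslant 2$, $\tau_{c+1}=1$, and $\tau_{c'+1}=0$. If $c<c'$, the entries of $\tau$ at $a+1<c+1<c'+1$ read (a value $\geqslant 2$, then $1$, then $0$), an occurrence of $210$; if $c'<c$, the entries at $a+1<c'+1<c+1$ read (a value $\geqslant 2$, then $0$, then $1$), an occurrence of $201$. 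Either way $\tau$ contains $201$ or $210$.

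For the ``if'' direction I would argue contrapositively: assume $\tau$ has an occurrence of $201$ or $210$, located at $q_1+1<q_2+1<q_3+1$ as above with $\tau_{q_1+1}\geqslant 2$. Because a zero entry of $\sigma$ becomes $0$ or $1$ in $\tau$, $\tau_{q_1+1}\geqslant 2$ forces $\sigma_{q_1}\geqslant 1$, hence $\tau_{q_1+1}=\sigma_{q_1}+1$. Using the formula for $\tau$ at $q_2$ and $q_3$ (value $\sigma_j+1$ if $\sigma_j>0$, else $0$ or $1$), the domination inequalities $\tau_{q_1+1}>\tau_{q_2+1}$ and $\tau_{q_1+1}>\tau_{q_3+1}$ transfer to $\sigma_{q_1}>\sigma_{q_2}$ and $\sigma_{q_1}>\sigma_{q_3}$ in every case. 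If $\sigma_{q_2}\neq\sigma_{q_3}$, then $(\sigma_{q_1},\sigma_{q_2},\sigma_{q_3})$ is a forbidden triple in $\sigma$, contradicting $\sigma\in\I(201,210)$; so $\sigma_{q_2}=\sigma_{q_3}$. If this common value were positive, then $\tau_{q_2+1}=\sigma_{q_2}+1=\sigma_{q_3}+1=\tau_{q_3+1}$, contradicting $\tau_{q_2+1}\neq\tau_{q_3+1}$; hence $\sigma_{q_2}=\sigma_{q_3}=0$, and $\tau_{q_2+1}=\chi(q_2\in Z)\neq\chi(q_3\in Z)=\tau_{q_3+1}$ means exactly one of $q_2,q_3$ belongs to $Z$. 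Finally, since $\sigma_{q_1}\geqslant 1$ lies strictly to the left of both $q_2$ and $q_3$, neither is a leading zero, so $q_2,q_3\in R(\sigma)$; therefore $Z\cap R(\sigma)$ contains exactly one of $q_2,q_3$ and is neither $\emptyset$ nor $R(\sigma)$.

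I expect the delicate step to be the case-by-case verification in the ``if'' direction that the strict inequalities really descend from $\tau$ to $\sigma$ — in particular handling the zero entries of $\sigma$, which do not simply shift by $1$ — together with the final identification $q_2,q_3\in R(\sigma)$ (rather than $L(\sigma)$), which is exactly what links the combinatorics of the pattern occurrence to the set-theoretic condition $Z\cap R(\sigma)\in\{\emptyset,R(\sigma)\}$. The ``only if'' direction, by contrast, should be a short explicit construction once the right pair $c,c'$ and the anchor $a=\ell(\sigma)+1$ are chosen.
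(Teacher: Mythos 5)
Your proof is correct. The ``only if'' direction (producing an explicit occurrence of $210$ or $201$ from $c\in Z\cap R(\sigma)$, $c'\in R(\sigma)\setminus Z$, anchored at $a=\ell(\sigma)+1$) is essentially identical to the paper's. For the ``if'' direction you diverge: the paper argues directly that when $Z\cap R(\sigma)\in\{\emptyset,R(\sigma)\}$, every entry of $\tau$ up to position $\ell(\sigma)+1$ is at most $1$ (so cannot start an occurrence), and the remaining suffix $(\tau_i)_{i\in[\ell(\sigma)+2,n+1]}$ is order-isomorphic to $(\sigma_i)_{i\in[\ell(\sigma)+1,n]}$, which avoids both patterns --- a two-line argument once one checks that the value map ($0\mapsto 0$ or $0\mapsto 1$ uniformly on $R(\sigma)$, and $v\mapsto v+1$ for $v>0$) is order-preserving. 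You instead take the contrapositive and pull a hypothetical occurrence in $\tau$ back to $\sigma$, deducing that its last two entries must be zeros of $\sigma$ lying in $R(\sigma)$ with exactly one in $Z$. Your route is longer and needs the case analysis you flag (transferring strict inequalities past zero entries, then ruling out $\sigma_{q_2}\neq\sigma_{q_3}$ via $\sigma\in\I(201,210)$, then locating $q_2,q_3$ in $R(\sigma)$), but each step checks out, and it has the mild advantage of explaining \emph{why} a bad occurrence can only arise from a split pair of right zeros; the paper's order-isomorphism argument buys brevity at the cost of that structural insight.
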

\begin{proof}
    Since the patterns 201 and 210 both begin with the value 2, the first value of an occurrence of either pattern in $\tau$ must be at least 2. For all $i \in [1,\ell(\sigma)+1]$, we have $\tau_i \leqslant 1$, therefore any occurrence of 201 or 210 in $\tau$ must be in $(\tau_i)_{i \in [\ell(\sigma)+2, n+1]}$. If $Z \cap R(\sigma) \in \{\emptyset, R(\sigma)\}$, then $(\tau_i)_{i \in [\ell(\sigma)+2, n+1]}$ is order-isomorphic to $(\sigma_i)_{i \in [\ell(\sigma)+1, n]}$, which avoids both patterns.

    Assume $Z \cap R(\sigma) \notin \{\emptyset, R(\sigma)\}$. Let $i,j \in R(\sigma)$ be such that $i \in Z$ and $j \notin Z$. In particular, $\tau_{\ell(\sigma)+2} \geqslant 2$, $\tau_{i+1} = 1$, and $\tau_{j+1} = 0$. If $i < j$, then $(\tau_{\ell(\sigma)+2}, \tau_{i+1}, \tau_{j+1})$ is an occurrence of 210. If $j < i$, then $(\tau_{\ell(\sigma)+2}, \tau_{j+1}, \tau_{i+1})$ is an occurrence of 201. \qedhere
\end{proof}

The proposition above establishes that in the generating tree growing on the left for $\I(201,210)$, the children of $\sigma$ are the sequences such that the subset $Z$ of zeros to which the value 1 is added must contain either all or none\footnote{When $R(\sigma)$ is not empty, one can think of it as containing only a single zero, since all zeros in this set are modified in the same way.} of $R(\sigma)$. Any subset of $L(\sigma)$ may intersect with $Z$ without creating an occurrence of 201 or 210. It follows that the number of children of $\sigma$ is $2^{\ell(\sigma) + \chi(R(\sigma) \neq \emptyset)}$.  

For all $\sigma \in \I(201,210)$, let $s(\sigma) = \ell(\sigma) + \chi(R(\sigma) \neq \emptyset)$.

\begin{lem}
The generating tree growing on the left for $\I(201,210)$ is described by the succession rule
$$\Omega_{\{201,210\}} = \left \{ \begin{array}{rclll}
    (0) \\
    (s) & \leadsto & (i) & \text{for} & i \in [1,s+1] \\
    && (i+1)^{2^{s-i}-1} & \text{for} & i \in [1,s-1].
\end{array} \right.$$
\end{lem}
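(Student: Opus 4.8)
The plan is to verify the succession rule edge by edge, tracking how the statistic $s$ changes from a node to its children. The axiom is immediate: the empty sequence $\varepsilon$ has $\ell(\varepsilon) = 0$ and $R(\varepsilon) = \emptyset$, so $s(\varepsilon) = 0$, the axiom of $\Omega_{\{201,210\}}$. Now fix $\sigma \in \I_n(201,210)$ and write $\ell = \ell(\sigma)$ and $r = \chi(R(\sigma) \neq \emptyset)$, so that $s := s(\sigma) = \ell + r$. By Proposition \ref{propZeros201+210}, the children of $\sigma$ in the generating tree growing on the left for $\I(201,210)$ are exactly the sequences $\tau = \child(\sigma, Z)$ with $Z = Z_L \cup Z_R$, where $Z_L \subseteq L(\sigma) = [1,\ell]$ is arbitrary and $Z_R \in \{\emptyset, R(\sigma)\}$; there are $2^\ell \cdot 2^r = 2^s$ of them.

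The crux is to compute $s(\tau)$ from the pair $(Z_L, Z_R)$. After the leading zero is prepended, an entry $\tau_{i+1}$ vanishes precisely when $\sigma_i = 0$ and $i \notin Z$, so $\Zeros(\tau) = \{1\} \cup \{i+1 : i \in \Zeros(\sigma)\setminus Z\}$. Hence $\ell(\tau) = 1 + \lambda$, where $\lambda$ is the length of the longest prefix $[1,\lambda]$ contained in $\Zeros(\sigma)\setminus Z$, and $R(\tau) = \emptyset$ exactly when $\Zeros(\sigma)\setminus Z$ equals that prefix $[1,\lambda]$. Since $\Zeros(\sigma) = [1,\ell] \cup R(\sigma)$ with $R(\sigma) \subseteq [\ell+2,n]$, a short case analysis on whether $Z_L$ is empty would give
$$s(\tau) = \begin{cases}
    \min(Z_L) & \text{if } \emptyset \neq Z_L = [\min(Z_L),\ell] \text{ and } Z_R = R(\sigma), \\
    \min(Z_L) + 1 & \text{if } Z_L \neq \emptyset \text{ but the previous case fails}, \\
    \ell + 1 & \text{if } Z_L = \emptyset \text{ and } Z_R = R(\sigma), \\
    \ell + 2 & \text{if } Z_L = \emptyset \text{ and } Z_R = \emptyset \quad (\text{possible only if } r = 1),
\end{cases}$$
the condition $Z_R = R(\sigma)$ being automatic when $r = 0$.

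Then it remains to count children by label. I would group the $2^s$ children by $j := \min(Z_L)$, treating $Z_L = \emptyset$ separately. For each $j \in [1,\ell]$, the sets $Z_L$ with $\min(Z_L) = j$ are the $2^{\ell-j}$ subsets of $[j,\ell]$ containing $j$, and $Z_R$ ranges over $2^r$ values, so there are $2^{s-j}$ such children; exactly one of them (namely $Z_L = [j,\ell]$ and $Z_R = R(\sigma)$) has label $j$, and the remaining $2^{s-j}-1$ have label $j+1$. The $2^r$ children with $Z_L = \emptyset$ contribute one node of label $\ell+1$ when $r = 0$, and one node of label $\ell+1$ plus one node of label $\ell+2$ when $r = 1$. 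Substituting $\ell = s - r$ and treating the cases $r = 0$ and $r = 1$ in turn, one finds in both cases that the children's labels form the multiset $\{(i) : i \in [1,s+1]\} \cup \{(i+1)^{2^{s-i}-1} : i \in [1,s-1]\}$ prescribed by $\Omega_{\{201,210\}}$; as a sanity check, $\sum_{i=1}^{s+1} 1 + \sum_{i=1}^{s-1}(2^{s-i}-1) = 2^s$ is indeed the total number of children.

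The main obstacle will be this last bout of bookkeeping: merging the two parity cases $r = 0$ and $r = 1$ into a single uniform rule, and correctly handling the boundary contributions --- the child with $Z_L = [j,\ell]$ at $j = \ell$, and the children with $Z_L = \emptyset$ --- so that each label ends up with exactly the right multiplicity. Everything else is routine once one observes that both $\ell(\tau)$ and the emptiness of $R(\tau)$ depend only on the prefix structure of $\Zeros(\sigma)\setminus Z$.
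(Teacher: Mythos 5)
Your proposal is correct and follows essentially the same route as the paper: it invokes Proposition \ref{propZeros201+210} to identify the admissible sets $Z = Z_L \cup Z_R$, determines $s(\tau)$ from the prefix structure of $\Zeros(\sigma)\setminus Z$, and counts children by label exactly as the paper does (the paper merely organizes the case analysis by first splitting on whether $R(\sigma)$ is empty, whereas you group by $\min(Z_L)$ and merge the two parities at the end). Your explicit four-case formula for $s(\tau)$ and the multiplicity count $2^{s-j}-1$ check out, so the remaining bookkeeping you flag is indeed routine.
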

\begin{proof}
    We show that the succession rule $\Omega_{\{201,210\}}$ is obtained by replacing each sequence $\sigma$ by the label $s(\sigma)$ in the generating tree growing on the left for $\I(201,210)$. First, the sequence $\varepsilon$ corresponding to the root has label $s(\varepsilon) = 0$.

    Let $\sigma \in \I(201,210)$. We know from Proposition \ref{propZeros201+210} that the set of children of $\sigma$ in the generating tree growing on the left for $\I(201,210)$ is
    $$\{\child(\sigma, Z) \; : \; Z \subseteq L(\sigma)\} \cup \{\child(\sigma, Z' \sqcup R(\sigma)) \; : \; Z' \subseteq L(\sigma)\}.$$
    Note that the above union is disjoint if $R(\sigma)$ is nonempty, and both sets in the union are equal if $R(\sigma)$ is empty.
    Let $Z$ be such that $\tau = \child(\sigma, Z) \in \I(201,210)$ (i.e. $Z \subseteq L(\sigma)$, or $Z = Z' \sqcup R(\sigma)$ for some $Z' \subseteq L(\sigma)$).
    We distinguish several cases:
\begin{itemize}
    \item If $R(\sigma) = \emptyset$, then $s(\sigma) = \ell(\sigma)$.
    \begin{itemize}
        \item If $Z$ is a (possibly empty) interval of positions at the end of $L(\sigma)$, that is if $Z = [i,\ell(\sigma)]$ for some $i \in [1,\ell(\sigma)+1]$ (the case $i=\ell(\sigma)+1$ corresponding to $Z = \emptyset$), then $L(\tau) = [1,i]$ and $R(\tau) = \emptyset$, therefore $s(\tau) = i$.
        \item Otherwise, let $i = \min(Z)$. It holds that $i = \ell(\tau) \in [1, \ell(\sigma)-1]$, and there exists $j \in \Zeros(\sigma) \backslash Z$ such that $j > i$. 
        In particular, $R(\tau) \neq \emptyset$, and $s(\tau)  = i+1$. There are $2^{s(\sigma)-i}-1$ possible sets $Z$ satisfying the present assumptions (that is all sets of the form $Z = \{i\} \sqcup Z'$ for some $Z' \subsetneq [i+1, \ell(\sigma)]$).
    \end{itemize}
    \item If $R(\sigma) \neq \emptyset$, then $s(\sigma) = \ell(\sigma)+1$.
    \begin{itemize}
        \item If $Z = \emptyset$, then $L(\tau) = [1,\ell(\sigma)+1]$ and $R(\tau) = \{i+1 \; | \; i \in R(\sigma)\}$, therefore $s(\tau) = s(\sigma) + 1$.
        \item If $Z = \Zeros(\sigma) \cap [i,n]$ for some $i \in [1,\ell(\sigma)+1]$, then $L(\tau) = [1,i]$ and $R(\tau) = \emptyset$, therefore $s(\tau) = i$.
        \item Otherwise, $Z$ is not empty nor consists of a set of final zeros of $\sigma$. This implies as before that $R(\tau) \neq \emptyset$, and that $s(\tau) = i+1$, where $i = \ell(\tau) = \min(Z) \in [1,\ell(\sigma)]$. There are $2^{s(\sigma)-i}-1$ possible sets $Z$ satisfying the present assumptions: either $Z = \{i\} \sqcup Z'$ for some $Z' \subseteq [i+1, \ell(\sigma)]$, or $Z = \{i\} \sqcup Z' \sqcup R(\sigma)$ for some $Z' \subsetneq [i+1, \ell(\sigma)]$). \qedhere
    \end{itemize}
\end{itemize}
\end{proof}

This succession rule gives a new, simple way of computing the generating function of $\I(201,210)$. For all $n,k \geqslant 0$, let $\mathfrak a_{n,k} = |\{\sigma \in \I_n(201,210) \; : \; k = s(\sigma)\}|$. Let $A(t,u) = \sum_{n,k \geqslant 0} \mathfrak a_{n,k} t^n u^k$.
\begin{thm}
    The generating function of $\I(201,210)$ is $A(t,1) = \frac{2-t-t\sqrt{1-8t}}{4t^2 - 4t + 2}$.
\end{thm}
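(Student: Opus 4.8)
The plan is to turn the succession rule $\Omega_{\{201,210\}}$ into a functional equation for $A(t,u)$ and then solve it by the kernel method, exactly along the lines advertised in the introduction. Concretely, for a node with label $s$ let $P_s(u)=\sum_{\tau}u^{s(\tau)}$, the sum ranging over the children $\tau$ of that node. Reading off $\Omega_{\{201,210\}}$,
$$P_s(u)=\sum_{i=1}^{s+1}u^i+\sum_{i=1}^{s-1}\bigl(2^{s-i}-1\bigr)u^{i+1},$$
and evaluating these geometric sums collapses the expression to
$$P_s(u)=u-\frac{u^{s+2}}{2-u}+\frac{2^s u^2}{2-u}$$
(easily checked by hand for $s=0,1,2$). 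Since every node of size $n$ and label $s$ has children of size $n+1$ whose labels are distributed according to $P_s(u)$, and the root $\varepsilon$ contributes the constant term $1$, summing over the tree level by level gives
$$A(t,u)=1+t\sum_{n,s}\mathfrak a_{n,s}\,t^n P_s(u)=1+tu\,A(t,1)-\frac{tu^2}{2-u}A(t,u)+\frac{tu^2}{2-u}A(t,2),$$
and clearing the denominator $2-u$ yields the kernel equation
$$(2-u+tu^2)\,A(t,u)=(2-u)\bigl(1+tu\,A(t,1)\bigr)+tu^2\,A(t,2).$$

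\textbf{Two linear relations.} The non-standard feature is that this equation carries two ``catalytic evaluations'', $A(t,1)$ and $A(t,2)$, rather than one (this is where the power-of-two multiplicities in the rule surface). I would first extract the cheap relation by setting $u=1$: the kernel becomes $1+t$ and one gets
$$A(t,1)=1+t\,A(t,2).$$
For the second relation I apply the kernel method. Viewed as a polynomial in $u$, the kernel $2-u+tu^2$ has a unique root that is a formal power series in $t$, namely
$$u_0=u_0(t)=\frac{4}{1+\sqrt{1-8t}}=2+4t+\cdots,$$
and $u_0$ satisfies $tu_0^2=u_0-2$. Because $[t^n]A(t,u)$ is a polynomial in $u$ (indeed $s(\sigma)\leqslant n$ for $\sigma\in\I_n$), the substitution $u=u_0(t)$ is legitimate in the denominator-free kernel equation, and since its left-hand side then vanishes we obtain
$$0=(2-u_0)\bigl(1+tu_0\,A(t,1)\bigr)+tu_0^2\,A(t,2).$$

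\textbf{Solving and simplifying.} Substituting $A(t,2)=(A(t,1)-1)/t$ from the first relation into the second turns it into a single linear equation for $A(t,1)$, whose solution is a rational function of $u_0$:
$$A(t,1)=\frac{u_0^2+u_0-2}{u_0^2+tu_0(2-u_0)}.$$
Using $tu_0^2=u_0-2$ to eliminate $t$, both numerator and denominator acquire a factor $u_0-1$, which cancels (note $u_0\neq 1$) leaving
$$A(t,1)=\frac{u_0(u_0+2)}{u_0^2+4}.$$
Finally, plugging in $u_0=4/(1+\sqrt{1-8t})$ and rationalizing the radical gives $A(t,1)=\dfrac{2-t-t\sqrt{1-8t}}{4t^2-4t+2}$. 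As a sanity check, this expands as $1+t+2t^2+6t^3+24t^4+116t^5+\cdots$, matching A212198.

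\textbf{Main obstacle.} Two points need care. The routine-looking one is the passage from $\Omega_{\{201,210\}}$ to the closed form of $P_s(u)$ and hence to the functional equation: the multiplicities $2^{s-i}-1$ must be bookkept so that the $2^s$ term produces exactly the evaluation $A(t,2)$, not some other specialization. The genuinely non-standard point is that a single kernel root cannot pin down two unknown series, so one must notice that the ``free'' relation $A(t,1)=1+tA(t,2)$ coming from $u=1$ is what closes the system; with it, the kernel method delivers $A(t,1)$. The concluding elimination of $u_0$ via $tu_0^2=u_0-2$ and the clearing of the square root are mechanical but should be carried out carefully to land precisely on the stated form.
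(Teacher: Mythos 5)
Your proof is correct and follows essentially the same route as the paper: both turn $\Omega_{\{201,210\}}$ into the functional equation $A(t,u)=1+tuA(t,1)+\frac{tu^2}{2-u}\bigl(A(t,2)-A(t,u)\bigr)$, extract $A(t,1)=1+tA(t,2)$ at $u=1$, and cancel the kernel at the root $u_0=\frac{4}{1+\sqrt{1-8t}}=\frac{1-\sqrt{1-8t}}{2t}$ to close the system. The only differences are cosmetic (clearing the denominator $2-u$ before applying the kernel method, and carrying the algebra in terms of $u_0$ via $tu_0^2=u_0-2$ rather than substituting the radical immediately).
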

\begin{proof}
The succession rule $\Omega_{\{201,210\}}$ gives the following equation:
\begin{align*}
    A(t,u) &= 1 + \sum_{n,k \geqslant 0} \mathfrak a_{n,k} t^{n+1} \left ( \sum_{i=1}^{k+1} u^i + \sum_{i=1}^{k-1} (2^{k-i} - 1) u^{i+1} \right ) \\
    &= 1 + \sum_{n,k \geqslant 0} \mathfrak a_{n,k} t^{n+1} \left (u + \sum_{i=1}^{k} 2^{k-i} u^{i+1} \right) \\
    &= 1 + \sum_{n,k \geqslant 0} \mathfrak a_{n,k} t^{n+1} \left (u + 2^k u \sum_{i=1}^{k} \left (\frac{u}{2} \right )^i \right) \\
    &= 1 + \sum_{n,k \geqslant 0} \mathfrak a_{n,k} t^{n+1} \left (u + u^2 \frac{2^k - u^k}{2-u} \right) \\
    &= 1 + tu A(t,1) + \frac{tu^2}{2-u} (A(t,2) - A(t,u)).
\end{align*}
This can be rewritten as
\begin{equation} \label{eqA}
   \left (1+\frac{tu^2}{2-u} \right ) A(t,u) = \frac{tu^2}{2-u} A(t,2) + tu A(t,1) + 1.
\end{equation}
We apply the kernel method. The kernel in \eqref{eqA} is $1+\frac{tu^2}{2-u}$. To cancel the kernel, we substitute $u$ for $U(t) = \frac{1-\sqrt{1-8t}}{2t}$. This yields the equation
\begin{equation} \label{eqAkernel}
    A(t,2) = tU(t)A(t,1)+1.
\end{equation}
Evaluating \eqref{eqA} for $u=1$ gives
$$A(t,1) = tA(t,2)+1.$$
Replacing $A(t,2)$ by its expression from \eqref{eqAkernel}, we obtain
$$A(t,1) = t^2 U(t) A(t,1) + t + 1.$$
This gives an expression for $A(t,1)$:
\begin{align*}
    A(t,1) &= \frac{t+1}{1-t^2 U(t)} = \frac{2-t-t\sqrt{1-8t}}{4t^2 - 4t + 2}. \qedhere
\end{align*}
\end{proof}

\section{Inversion sequences avoiding 010 and 102} \label{010+102}

We begin with two simple but useful properties satisfied by inversion sequences avoiding the pattern 010.

\begin{prop} \label{prop010}
    Let $\sigma \in \I(010)$. Then
    \begin{enumerate}
        \item all entries of value 0 in $\sigma$ must be consecutive and at the beginning of $\sigma$,
        \item all entries of value 1 in $\sigma$ must be consecutive.
    \end{enumerate}
\end{prop}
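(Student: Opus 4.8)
The plan is to derive both statements directly from the definition of an occurrence of the pattern $010$: such an occurrence is a subsequence $(\sigma_{q_1},\sigma_{q_2},\sigma_{q_3})$ with $q_1<q_2<q_3$ and $\sigma_{q_1}=\sigma_{q_3}<\sigma_{q_2}$. I would prove part 1 first and then use it to prove part 2, because the two statements are not independent: a naive attempt at part 2 fails since $(1,0,1)$ is \emph{not} an occurrence of $010$ (the repeated outer value $1$ is not smaller than the middle value $0$), so a $0$ sandwiched between two $1$'s must be excluded by another argument, namely part 1.

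For part 1, I would argue by contradiction. Suppose $\Zeros(\sigma)$ is not an interval of positions: then there exist $i<j$ with $\sigma_i=\sigma_j=0$ and some $k$ with $i<k<j$ and $\sigma_k\neq 0$, i.e. $\sigma_k>0$. Then $(\sigma_i,\sigma_k,\sigma_j)=(0,\sigma_k,0)$ satisfies $\sigma_i=\sigma_j=0<\sigma_k$, hence is an occurrence of $010$ in $\sigma$, contradicting $\sigma\in\I(010)$. So $\Zeros(\sigma)$ is an interval. To see it starts at position $1$: this is vacuous if $\sigma=\varepsilon$, and otherwise $\sigma_1\in\{0\}$ forces $1\in\Zeros(\sigma)$, so the interval of zero positions is $[1,m]$ with $m=|\Zeros(\sigma)|$, i.e. the zeros are consecutive and at the beginning.

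For part 2, again suppose for contradiction that there are positions $i<k<j$ with $\sigma_i=\sigma_j=1$ and $\sigma_k\neq 1$. If $\sigma_k=0$, then by part 1 all zeros of $\sigma$ lie at the beginning, so $k\in\Zeros(\sigma)$ would force $i\in\Zeros(\sigma)$ since $i<k$, contradicting $\sigma_i=1$. Hence $\sigma_k\geqslant 2$, and then $(\sigma_i,\sigma_k,\sigma_j)=(1,\sigma_k,1)$ satisfies $\sigma_i=\sigma_j=1<\sigma_k$, again an occurrence of $010$, a contradiction. Therefore the entries equal to $1$ occupy consecutive positions.

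The only real subtlety, which I would flag explicitly, is that part 2 genuinely uses part 1 through the case $\sigma_k=0$; apart from that, both parts are an immediate unwinding of the definition of pattern containment, so I expect no computational obstacle.
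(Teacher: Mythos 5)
Your proposal is correct and follows essentially the same route as the paper: both parts are proved by contradiction by exhibiting an occurrence of $010$, with part 1 used in part 2 to rule out the case where the middle entry is a zero. The only difference is presentational (you spell out why a zero between two ones is impossible slightly more explicitly than the paper does), so nothing further is needed.
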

\begin{proof}
    \begin{enumerate}
        \item Assume for the sake of contradiction that $\sigma$ contains two non-consecutive entries of value 0. In other words, there exists a subsequence $(\sigma_i, \sigma_j, \sigma_k)$ such that $i<j<k$, $\sigma_i = \sigma_k = 0$, and $\sigma_j \neq 0$. In particular, $\sigma_j \geqslant 1$, therefore $(\sigma_i, \sigma_j, \sigma_k)$ is an occurrence of the pattern 010.
        
        We showed that all entries of value 0 in $\sigma$ must be consecutive. Since any nonempty inversion sequence $\sigma$ satisfies $\sigma_1 = 0$, the consecutive entries of value 0 must be at the beginning of $\sigma$.
        \item Assume for the sake of contradiction that $\sigma$ contains two non-consecutive entries of value 1. In other words, there exists a subsequence $(\sigma_i, \sigma_j, \sigma_k)$ such that $i<j<k$, $\sigma_i = \sigma_k = 1$, and $\sigma_j \neq 1$. We showed that all entries of value 0 are consecutive and at the beginning of $\sigma$, hence $\sigma_j \neq 0$. It follows that $\sigma_j \geqslant 2$, therefore $(\sigma_i, \sigma_j, \sigma_k)$ is an occurrence of the pattern 010. \qedhere
    \end{enumerate}
\end{proof}

\subsection{Modifying the construction}

As we showed in Proposition \ref{PropSufficientPatterns}, the generating tree growing on the left is not always well-defined for inversion sequences avoiding a pattern with several zeros. Specifically, the children of sequences of $\I_n(010,102)$ in this tree are precisely the sequences of $\I_{n+1}(010,102)$ in which the value 1 does not appear to the right of a greater value (otherwise their parent sequence would contain a 0 to the right of a greater value, and therefore contain the pattern 010).
To complete this construction, we shall introduce some new steps so that the value 1 may appear at any position (while avoiding the patterns 010 and 102).

A possible solution could be to modify the way that entries of value 1 are placed, allowing them to be a newly inserted element in the sequence rather than a replacement of a zero entry.
However, we remark that there are some unwanted restrictions on inversion sequences that can be generated by inserting a new 1 in this manner. Indeed, if a sequence $\tau$ were obtained by inserting a new entry of value 1 at position $i$ in an inversion sequence $\sigma \in \I_n$, then for all $j \in [i+1, n+1]$, we would have $\tau_j = \sigma_{j-1} < j-1$. In particular, a sequence $\tau$ which contains a saturated entry $\tau_j = j-1$ for some $j > i$ cannot be generated this way. Nevertheless, for the class $\I(010,102)$, we specifically want to generate a sequence $\tau$ which avoids 102, and in which the value 1 inserted at position $i$ is to the right of a value $k > 1$. In particular, for all $j > i$, $\tau$ must satisfy $\tau_j \leqslant k < i-1 < j-1$ to avoid 102, so the aforementioned problem disappears. Do note that inserting an entry of value 1 to the left of every greater value would still cause this problem, so we cannot place every single entry of value 1 by insertion either. This leads us to consider a hybrid approach in which entries of value 1 are only placed by insertion if they are to the right of a greater value, and still placed by replacing a zero otherwise.
This describes a new generating tree for $\I(010,102)$, in which the parent of each nonempty sequence is obtained by
\begin{itemize}
    \item removing an entry of value 1, if such an entry appears to the right of a greater value (it does not matter which entry of value 1 is removed since they are all consecutive by Proposition \ref{prop010});
    \item removing the first zero and subtracting 1 from all nonzero entries (as in Section \ref{sectionGT}), otherwise.
\end{itemize}

We introduce a second small change in this construction. It is difficult to explain precisely the reason for this choice, but it allows for a simpler presentation of the computation of the generating function of $\I(010, 102)$ in Theorem \ref{thmGF010_102}, by reducing the number of cases to consider.
If a sequence contains an entry of value 1 with no greater value to its left (i.e. the sequence begins with a prefix of zeros, followed by a 1), then its parent is now obtained by subtracting 1 from the leftmost entry of value 1 only\footnote{When exploring the ancestors of a sequence of this form, its parent in our original definition will eventually appear, after all entries of value 1 have been turned into zeros. We are simply placing some intermediate sequences on that path.}.
Otherwise, its parent is determined as described just above.
The corresponding tree is represented in Figure \ref{GT010+102}. It should be noted that this tree does not quite conform to the usual definition of a generating tree, as some nodes are on the same level as their parent.

The article \cite{JumpingGT} introduces the notion of ``jumping" generating trees, in which each edge has a (bounded) positive integer length.
If a node at level $n$ has a child through an edge of length $k$, then this child is at level $n+k$ (in a classical generating tree, all edges have length $1$). The tree in Figure \ref{GT010+102} could be seen as a jumping generating tree having edges of length 0 or 1.

We claim that the following succession rule describes a generating tree isomorphic to the one described above (and represented in Figure \ref{GT010+102}); the arrow $\overset{k} \leadsto$ marks an edge of length $k$.
$$\Omega_{\{010,102\}} = \left \{ \begin{array}{rclll}
    (\mathtt C,0) \\
    (\mathtt C,z) & \overset{1} \leadsto & (\mathtt C,z+1) \\
    & \overset{0} \leadsto & (\mathtt B,z-1,2) & \text{if} & z \geqslant 2 \\
    (\mathtt B,z,s) & \overset{1} \leadsto & (\mathtt P,z+1,s) \\
    & \overset{0} \leadsto & (\mathtt B,z-1,s+1) & \text{if} & z \geqslant 2 \\
    (\mathtt P,z,s) & \overset{1} \leadsto & (\mathtt P,z+1,s) \\
    & \overset{1} \leadsto & (\mathtt R,z,s+1) \\
    & \overset{1} \leadsto & (\mathtt M,z,i) & \text{for} & i \in [3,s] \\
    & \overset{0} \leadsto & (\mathtt L,z-1) \\
    (\mathtt L,z) & \overset{1} \leadsto & (\mathtt P,z+1,2) \\
    & \overset{0} \leadsto & (\mathtt L,z-1) & \text{if} & z \geqslant 2 \\
    (\mathtt R,z,s) & \overset{1} \leadsto & (\mathtt P,z+1,s) \\
    & \overset{1} \leadsto & (\mathtt R,z,s+1) \\
    (\mathtt M,z,s) & \overset{1} \leadsto & (\mathtt P,z+1,s) \\
    & \overset{1} \leadsto & (\mathtt M,z,s).
\end{array} \right.$$

The generating function of $\I(010,102)$ can be computed from the succession rule $\Omega_{\{010,102\}}$, with the kernel method. However, this result is easier to achieve by studying only the subset of sequences of $\I(010,102)$ which do not contain the value 1, and this is how we will proceed in Section \ref{sec_proofs_010_102}. Due to this restriction, the construction used in Section \ref{sec_proofs_010_102} ``jumps" over sequences containing the value 1, so they are grayed out in Figure \ref{GT010+102}.

\begin{landscape}
\begin{figure}
\centering
\begin{tikzpicture}[level distance=20mm]
\begin{scope}[scale=0.83]
\tikzstyle{level 1}=[sibling distance=0]
\node{\large $\varepsilon$}
child {node{0}
    child {node{00}
        child {node{000}
            child {node{0000}
                child {node{00000}}
                child[xshift = 14mm, yshift = 20mm] {node{\gray{0001}}
                    child {node{00002}}
                    child[xshift = 14mm, yshift = 20mm] {node{\gray{0011}}
                        child[xshift = 14mm, yshift = 20mm] {node{\gray{0111}}}
                        child {node{00022}}
                    }
                }
            }
            child[xshift = 56mm, yshift = 20mm] {node{\gray{001}}
                child {node{0002}
                    child[xshift = -14mm] {node{00003}}
                    child {node{\gray{00021}}}
                    child[xshift = 14mm, yshift = 20mm] {node{\gray{0012}}
                        child{node{00023}}
                        child[xshift = 14mm, yshift = 20mm] {node{\gray{0112}}}
                    }
                }
                child[xshift = 42mm, yshift = 20mm] {node{\gray{011}}
                    child {node{0022}
                        child[xshift = -14mm] {node{00033}}
                        child {node{\gray{00221}}}
                        child[xshift = 14mm] {node{\gray{00212}}}
                        child[xshift = 14mm, yshift = 20mm] {node{\gray{0122}}}
                    }
                }
            }
        }
        child[xshift = 157mm, yshift = 20mm] {node{\gray{01}}
            child {node{002}
                child[xshift = -24mm] {node{0003}
                    child[xshift = -7mm] {node{00004}}
                    child[xshift = 7mm] {node{\gray{00031}}}
                    child[xshift = 21mm, yshift = 20mm] {node{\gray{0013}}
                        child{node{00024}}
                        child[xshift = 14mm, yshift = 20mm] {node{\gray{0113}}}
                    }
                }
                child[xshift = 24mm] {node{\gray{0021}}
                    child[xshift = -14mm] {node{00032}}
                    child {node{\gray{00211}}}
                }
                child[xshift = 38mm, yshift = 20mm] {node{\gray{012}}
                    child {node{0023}
                        child {node{00034}}
                        child[xshift = 14mm] {node{\gray{00231}}}
                        child[xshift = 14mm, yshift = 20mm] {node{\gray{0123}}}
                    }
                }
            }
        }
    }
};
\end{scope}
\end{tikzpicture}
\caption{Part of a generating tree for $\I(010,102)$. Sequences containing the value 1 are colored in gray.}  
\label{GT010+102}
\end{figure}
\end{landscape}

For the proof of Theorem \ref{thmGF010_102} to come in Section \ref{sec_proofs_010_102}, it is not necessary to prove our claim that the rule $\Omega_{\{010,102\}}$ describes a generating tree isomorphic to that of Figure \ref{GT010+102}. Nevertheless, to help convince the reader of the validity of our claim, we now briefly explain the correspondence between inversion sequences and the labels appearing in this rule. In the following descriptions, the letters $\mu$ and $\nu$ denote nonempty sequences of values greater than or equal to 2.
\begin{itemize}
    \item The letter $\mathtt C$ marks \emph{constant} sequences of the form $(0)^a$ with $a \geqslant 0$.
    \item The letter $\mathtt B$ marks \emph{binary} sequences of the form $(0)^a \cdot (1)^b$ with $a,b > 0$.
    \item The letter $\mathtt P$ marks \emph{pushed} sequences of the form $(0, \dots, 0) \cdot \mu$. These sequences are named after the $\push$ function of Section \ref{sec_proofs_010_102}, and they correspond to the set $\mathfrak B$ in Section \ref{sec_proofs_010_102}.
    \item The letter $\mathtt L$ marks sequences having 1-entries on the \emph{left}, of the form $(0, \dots, 0, 1 \dots, 1) \cdot \mu$.
    \item The letter $\mathtt R$ marks sequences having 1-entries on the \emph{right}, of the form $(0, \dots, 0) \cdot \mu \cdot (1, \dots, 1)$.
    \item The letter $\mathtt M$ marks sequences having 1-entries in the \emph{middle}, of the form $(0, \dots, 0) \cdot \mu \cdot (1, \dots, 1) \cdot \nu$.
    \item The parameter $z$ counts the \emph{zeros} of an inversion sequence.
    \item The parameter $s$ counts the \emph{active sites} of an inversion sequence, for some definition of active sites that is similar, albeit slightly different, from that of Section \ref{sec_proofs_010_102}.
\end{itemize}

\subsection{Enumerating \texorpdfstring{$\I(010, 102)$}{I(010, 102)}} \label{sec_proofs_010_102}

For all $n \geqslant 0$, let $\mathfrak A_n = \{ \sigma \in \I_n(010,102) \; : \; 1 \notin \Vals(\sigma)\}$ be the set of inversion sequences of size $n$ which avoid 010 and 102, and do not contain the value 1. Let $\mathfrak A = \coprod_{n \geqslant 0} \mathfrak A_n$.
For all $\sigma \in \I$, let $\push(\sigma) = 0 \cdot (\sigma_i + \chi(\sigma_i > 0))_{i \in [0,n]}$ be the sequence obtained by inserting a zero at the beginning of $\sigma$, and adding 1 to the value of all nonzero entries.

\begin{prop} \label{propImgPush}
    The image $\push(\I(010,102))$ is the set $\mathfrak A \backslash \{\varepsilon\}$.
\end{prop}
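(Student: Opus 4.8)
\medskip

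The plan is to prove the stronger statement that $\push$ restricts to a bijection from $\I(010,102)$ onto $\mathfrak A \setminus \{\varepsilon\}$; the set equality in the proposition follows at once. The computational backbone will be the elementary fact that $v \mapsto v + \chi(v>0)$ is a strictly increasing bijection from $\mathbb Z_{\geqslant 0}$ onto $\mathbb Z_{\geqslant 0}\setminus\{1\}$, with inverse $v \mapsto v - \chi(v>0)$. From this I would record two consequences used throughout: first, for any $\sigma$ the sequence $\push(\sigma) = \child(\sigma,\emptyset)$ is a nonempty inversion sequence whose first entry is $0$, whose non-leading part $(\push(\sigma)_{i+1})_i$ is order-isomorphic to $\sigma$, and which never takes the value $1$ (its entries are $0$ or of the form $\sigma_i+1$ with $\sigma_i\geqslant 1$); second, $\push$ is injective, and on the set of nonempty inversion sequences avoiding the value $1$ it is inverted by ``delete the first entry and apply $v\mapsto v-\chi(v>0)$ to the rest''.

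For the inclusion $\push(\I(010,102)) \subseteq \mathfrak A\setminus\{\varepsilon\}$, I would fix $\sigma\in\I(010,102)$, put $\tau = \push(\sigma)$, and only need to check that $\tau$ avoids $010$ and $102$ (the rest being covered above). In an occurrence of $102$ in $\tau$, the leftmost entry carries the middle of the three values, hence is positive, hence is at least $2$ since $\tau$ has no entry equal to $1$; so the whole occurrence sits in the non-leading part of $\tau$, which is order-isomorphic to $\sigma$, contradicting $\sigma\in\I(102)$. In an occurrence $(\tau_a,\tau_b,\tau_c)$ of $010$, the middle value $\tau_b$ is $\geqslant 2$, so $b\geqslant 3$; if $a\geqslant 2$ the occurrence again lies in the order-isomorphic non-leading part, while if $a=1$ then $\tau_c=\tau_a=0$ forces $\sigma_{c-1}=0$ and $\sigma_{b-1}\geqslant 1$, so $(\sigma_1,\sigma_{b-1},\sigma_{c-1})$ --- using $\sigma_1=0$, valid since $\sigma$ is nonempty --- is an occurrence of $010$ in $\sigma$. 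Either way we reach a contradiction, so $\tau\in\mathfrak A\setminus\{\varepsilon\}$.

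For the reverse inclusion, I would take $\tau\in\mathfrak A_{n+1}$, define $\sigma = (\tau_{i+1}-\chi(\tau_{i+1}>0))_{i\in[1,n]}$ using the inverse map, and verify: (i) $\sigma\in\I_n$, since $\tau_{i+1}\leqslant i$ gives $\sigma_i\leqslant i-1$; (ii) $\push(\sigma)=\tau$, by construction together with the bijectivity of $v\mapsto v+\chi(v>0)$ onto $\mathbb Z_{\geqslant 0}\setminus\{1\}$ (which is where $1\notin\Vals(\tau)$ is used); (iii) $\sigma$ avoids $010$ and $102$, which is immediate since $(\tau_{i+1})_i$ is order-isomorphic to $\sigma$, so any occurrence of a pattern in $\sigma$ lifts to one in $\tau$. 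Then $\sigma\in\I(010,102)$ and $\tau=\push(\sigma)$, finishing the proof.

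The only genuinely delicate point I anticipate is in the inclusion $\subseteq$: ruling out an occurrence of $010$ that uses the freshly inserted leading zero of $\push(\sigma)$ (for $102$ the leading zero simply cannot participate, as it would have to carry the middle value). This is resolved by the observation above, tracing such an occurrence back to one in $\sigma$ built on the original leading zero $\sigma_1$. Everything else is bookkeeping with the increasing bijection $v\mapsto v+\chi(v>0)$ and the order-isomorphism it induces between $\sigma$ and the non-leading part of $\push(\sigma)$.
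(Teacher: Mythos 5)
Your proof is correct and follows essentially the same route as the paper: the forward inclusion via the order-isomorphism between $\sigma$ and the non-leading part of $\push(\sigma)$, with separate handling of the new leading zero for the pattern $010$, and the reverse inclusion via the explicit inverse map. The only cosmetic difference is that you rule out an occurrence $(\tau_1,\tau_b,\tau_c)$ of $010$ by exhibiting $(\sigma_1,\sigma_{b-1},\sigma_{c-1})$ directly, where the paper instead invokes Proposition~\ref{prop010}; both rest on the same fact that $\sigma_1=0$.
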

\begin{proof}
Let $\sigma \in \I(010,102)$, and let $\tau = \push(\sigma)$. By construction, $\tau$ is an inversion sequence, $\tau$ does not contain the value $1$, and $|\tau| > |\sigma|$ so $\tau$ cannot be empty. Let us now show that $\tau$ avoids 010 and 102. The sequence $(\tau_i)_{i \in [2,n+1]}$ is order-isomorphic to $\sigma$, which avoids 010 and 102. This means that any occurrence of either pattern in $\tau$ must involve the first entry of $\tau$. Since $\tau_1 = 0$, it cannot be part of an occurrence of 102. If $\tau$ contained an occurrence $(\tau_1, \tau_i, \tau_j)$ of 010 for some $i < j$, then $\sigma$ would contain a zero $\sigma_{j-1}$ after a nonzero value $\sigma_{i-1}$, which is impossible by Proposition \ref{prop010}.

Let $\tau \in \mathfrak A \backslash \{\epsilon\}$, and let $\sigma$ be the sequence obtained by removing the first zero of $\tau$ and subtracting 1 from the value of all nonzero entries. Then $\sigma \in \I(010,102)$, and $\push(\sigma) = \tau$. \qedhere
\end{proof}

\begin{corol} \label{corolPushBijection}
    For all $n \geqslant 0$, $|\I_n(010, 102)| = |\mathfrak A_{n+1}|$.
\end{corol}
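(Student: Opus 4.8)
The plan is to deduce this immediately from Proposition \ref{propImgPush} by observing that $\push$ is size-increasing by exactly one and injective. First I would note that, directly from its definition, $\push(\sigma)$ has size $|\sigma| + 1$, so $\push$ maps $\I_n(010,102)$ into $\mathfrak A_{n+1}$. Next I would argue injectivity: the construction in the second paragraph of the proof of Proposition \ref{propImgPush} exhibits a two-sided inverse, namely removing the leading zero and subtracting $1$ from every nonzero entry; since this recovers $\sigma$ from $\push(\sigma)$, the map $\push$ is injective.

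Then I would combine these facts with surjectivity onto $\mathfrak A \setminus \{\varepsilon\}$ from Proposition \ref{propImgPush}. Restricting to size $n$ on the source, the image is exactly the set of elements of $\mathfrak A \setminus \{\varepsilon\}$ of size $n+1$; but for $n \geqslant 0$ we have $n+1 \geqslant 1$, so $\varepsilon$ (which has size $0$) never occurs among sequences of size $n+1$, and this image is simply $\mathfrak A_{n+1}$. Hence $\push$ restricts to a bijection $\I_n(010,102) \to \mathfrak A_{n+1}$, which yields $|\I_n(010,102)| = |\mathfrak A_{n+1}|$.

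I do not expect any real obstacle here: the substantive content — that $\push$ lands in $\mathfrak A$, preserves pattern-avoidance, and is onto $\mathfrak A \setminus \{\varepsilon\}$ — is already contained in Proposition \ref{propImgPush}. The only points requiring (minor) care are the size bookkeeping (confirming the shift is by exactly one) and the remark that excluding $\varepsilon$ is harmless once the size is at least one, so the corollary is essentially a clean restatement of the proposition at the level of graded cardinalities.
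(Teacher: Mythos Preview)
Your proposal is correct and follows essentially the same approach as the paper: use the injectivity of $\push$, the fact that it increases size by one, and Proposition~\ref{propImgPush} to conclude that $\push$ restricts to a bijection $\I_n(010,102)\to\mathfrak A_{n+1}$. The paper's proof is slightly terser but makes the identical argument.
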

\begin{proof}
    Let $n \geqslant 0$. The function $\push$ is injective, so it describes a bijection between $\I_n(010, 102)$ and $\push(\I_n(010,102))$. By construction, the image under $\push$ of a sequence of size $n$ is a sequence of size $n+1$, so it follows from Proposition \ref{propImgPush} that $\push(\I_n(010,102)) = \mathfrak A_{n+1}$. \qedhere
\end{proof} 

For all $n \geqslant 0$ and $\sigma \in \I_n(010,102)$, let
$$\Sites(\sigma) = \{i \in [2, n+1] \; : \; (\sigma_j)_{j \in [1,i-1]} \cdot 1 \cdot (\sigma_j)_{j \in [i,n]} \in \I_{n+1}(010,102)\}$$
be the set of positions, called \textit{active sites}, where inserting the value 1 in $\sigma$ does not create an occurrence of 010 or 102. Note that we do not allow the value 1 to be inserted at the beginning of $\sigma$ (i.e. at position 1), since the result would not be an inversion sequence.

If $\sigma$ already contains the value 1, then a new entry of value 1 can only be inserted next to the current entries of value 1, by Proposition \ref{prop010}, and we have little interest in the active sites of $\sigma$ in that case. We next characterize the active sites of $\sigma$ when $\sigma$ does not contain the value 1. Figure \ref{Fig_active_sites} represents the position of such an active site, as described in Proposition \ref{propSitesA} below.

\begin{prop} \label{propSitesA}
    Let $n \geqslant 1$, $\sigma \in \mathfrak A_n$, and $z = |\Zeros(\sigma)|$. Then
     $$\Sites(\sigma) = \{i \in [z+1,n+1] \; : \; \min((\sigma_j)_{j \in [z+1,i-1]}) \geqslant \max((\sigma_j)_{j \in [i,n]})\}.$$
\end{prop}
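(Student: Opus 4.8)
My plan is to prove the set equality by double inclusion, working directly from the definition of $\Sites(\sigma)$ and the structure theorem for $010$-avoiders (Proposition~\ref{prop010}). Throughout, fix $n \geqslant 1$, $\sigma \in \mathfrak A_n$, and $z = |\Zeros(\sigma)|$; by part~(1) of Proposition~\ref{prop010} the zeros of $\sigma$ are exactly the positions $1, \dots, z$, and since $1 \notin \Vals(\sigma)$ every entry $\sigma_j$ with $j > z$ satisfies $\sigma_j \geqslant 2$. Denote by $\tau^{(i)}$ the sequence obtained by inserting $1$ at position $i$, i.e. $\tau^{(i)} = (\sigma_j)_{j \in [1,i-1]} \cdot 1 \cdot (\sigma_j)_{j \in [i,n]}$.

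First I would dispose of the constraint $i \geqslant z+1$. If $i \leqslant z$, then inserting $1$ at position $i$ places it among the leading zeros, so $\tau^{(i)}$ has a $0$ (namely $\tau^{(i)}_{i-1}$ or, if $i=1$, we are forbidden anyway; for $2 \le i \le z$ there is a zero before position $i$) followed by the inserted $1$ followed by another $0$ at position $i+1 \leqslant z+1$, giving an occurrence of $010$. Hence no $i \in [2,z]$ is an active site, which matches the right-hand side (where the index set starts at $z+1$). For $i = z+1$ or beyond, the inserted $1$ sits weakly to the right of all the zeros, so it can no longer be the middle or right entry of a $010$; the only ways $\tau^{(i)}$ could fail are (a) the inserted $1$ is the ``$1$'' of a $102$, i.e. there is an entry of value $\geqslant 2$ to its left and an entry of value strictly between... — more precisely a $102$ occurrence $(a, 1, c)$ with $a$ to the left, $1$ the inserted entry, $c$ to the right and $1 < c < a$; (b) the inserted $1$ could in principle be the ``$1$'' of an $010$ only if flanked by zeros, which we ruled out; (c) occurrences of $010$ or $102$ not involving the inserted entry, which are impossible since $\sigma$ avoids both patterns and deleting the inserted $1$ from $\tau^{(i)}$ returns $\sigma$. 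So for $i \geqslant z+1$, $i$ is an active site if and only if no pattern $(a,1,c)$ of type~(a) exists, and also one must check $\tau^{(i)}$ is a genuine inversion sequence (the inserted $1 \le i-1$ since $i \geqslant z+1 \geqslant 2$, and all shifted entries still satisfy the bound because their positions only increase — this is routine).

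The core of the argument is then translating condition~(a) into the inequality $\min\big((\sigma_j)_{j \in [z+1,i-1]}\big) \geqslant \max\big((\sigma_j)_{j \in [i,n]}\big)$. An occurrence $(a, 1, c)$ of $102$ with the inserted $1$ in the middle requires some $\sigma_p$ with $p \leqslant i-1$ and some $\sigma_q$ with $q \geqslant i$ such that $\sigma_p > \sigma_q \geqslant 2$ (we need $\sigma_q > 1$, and since $\sigma_q \neq 1$ this is $\sigma_q \geqslant 2$, and $\sigma_q < \sigma_p$); note $p$ must be $\geqslant z+1$ as well since $\sigma_p \geqslant 2 > 0$. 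Such a pair exists precisely when $\min_{j \in [z+1,i-1]} \sigma_j < \max_{j \in [i,n]} \sigma_j$ (taking $\sigma_p$ to be the minimum on the left block and $\sigma_q$ the maximum on the right block realizes the worst case, and conversely any witnessing pair gives this strict inequality; here I would note the edge cases where one of the two index ranges $[z+1,i-1]$ or $[i,n]$ is empty, with the usual conventions $\min \emptyset = +\infty$, $\max \emptyset = -1$, so that the inequality holds vacuously and $i$ is indeed active). Negating, $i \geqslant z+1$ is an active site iff $\min_{j \in [z+1,i-1]} \sigma_j \geqslant \max_{j \in [i,n]} \sigma_j$, which is exactly the right-hand side.

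The main obstacle, and the point needing the most care, is verifying that condition~(a) is the \emph{only} obstruction once $i \geqslant z+1$ — in particular, ruling out that the inserted $1$ participates in a $102$ as its first or last entry (as ``$1$'' it can only be the middle value since $1$ is the smallest pattern value... wait, in $102$ the values are $1,0,2$, so ``$1$'' is the middle \emph{position} but value-rank middle; the inserted entry of actual value $1$ can match the pattern-letter $0$ only if there is an actual $0$ somewhere, but all actual zeros are at positions $\leqslant z < i$, so a pattern $(x, 1, y)$ with $1$ playing the role of the pattern's $0$ would need $x$ before and $y$ after with $x, y > 1$, i.e. an occurrence of $010$'s shape $\ge\text{-}\text{small}\text{-}\ge$; but $102$'s letter $0$ has one larger value after it and — re-examining — the inserted $1$ as pattern-letter $1$ forces exactly type~(a), as pattern-letter $0$ is impossible since that letter is the global minimum $0$ of the pattern and our inserted value is $1 > 0$ with no room, and as pattern-letter $2$ is impossible since $1$ is not the max). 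I would write this case analysis out cleanly, keying on the fact that all entries left of position $z+1$ are $0$ and all entries right of it are $\geqslant 2$, which collapses most cases immediately.
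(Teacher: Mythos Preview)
Your approach is essentially the same as the paper's: rule out positions $i \leqslant z$ via a forced $010$, observe that for $i \geqslant z+1$ the only possible new occurrence is a $102$ with the inserted $1$ playing the role of the pattern's minimum, and translate that into the $\min/\max$ inequality. The paper's write-up is just a more compressed version of what you outline.

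There is, however, a genuine slip in your direction of inequality. In the pattern $102$ the middle entry is the smallest, so an occurrence $(a,1,c)$ with the inserted $1$ in the middle requires $1 < a < c$, i.e.\ $\sigma_p < \sigma_q$ with $p \in [z+1,i-1]$ and $q \in [i,n]$. You wrote ``$1 < c < a$'' and ``$\sigma_p > \sigma_q \geqslant 2$'', which is backwards. Your subsequent translation ``such a pair exists precisely when $\min_{j \in [z+1,i-1]}\sigma_j < \max_{j \in [i,n]}\sigma_j$'' is the correct translation of the \emph{correct} condition $\sigma_p < \sigma_q$, not of the reversed condition you stated; had you faithfully translated $\sigma_p > \sigma_q$ you would have obtained $\max_{\text{left}} > \min_{\text{right}}$ instead. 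So two sign errors happen to cancel and you land on the right statement, but the intermediate reasoning as written is inconsistent and needs to be fixed before the argument is sound.
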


\begin{proof} The value 1 cannot be inserted at positions $[2, z]$, since it would create a subsequence $(0,1,0)$. Inserting the value 1 at any other position (i.e. in $[z+1,n+1])$ cannot create an occurrence of 010, since there are no zeros to the right and no other entry of value $1$ in $\sigma$. It follows that $\Sites(\sigma)$ is the subset of $[z+1, n+1]$ of positions where inserting the value 1 does not create an occurrence of 102. Note that all values of $(\sigma_i)_{i \in [z+1, n]}$ are greater than 1, so the new entry of value 1 could only take the role of the minimal value in an occurrence of 102. In other words, $i \in [z+1, n+1]$ is an active site of $\sigma$ if and only if no value of $(\sigma_j)_{j\in [z+1, i-1]}$ is less than a value of $(\sigma_j)_{j \in [i,n]}$. \qedhere
\end{proof}

\begin{figure}[ht]
\centering
\begin{tikzpicture}
    \draw[fill = lightgray] (0,0) -- (0,1) -- (3,1) -- (3,0) -- (0,0);
    \draw[fill = lightgray] (3,3) -- (3,4) -- (6,4) -- (6,3) -- (3,3);
    \draw[fill = lightgray] (6,2) -- (9,2) -- (9,3) -- (6,3) -- (6,2);
    \filldraw (6, 1.5) circle(4pt);
    \node at (6,1.2) {active site};

    \node[left] at (0,0.5) {0};
    \node[left] at (0,1.5) {1};
    \node[left] at (0,3) {$\geqslant 2$};
    \draw (0,0) -- (9,0);
    \draw (0,1) -- (9,1);
    \draw (0,2) -- (9,2);
    \draw (0,4) -- (9,4);
    \draw (0,0) -- (0,4);
    \draw (9,0) -- (9,4);
    \node[rotate = 90] at (-0.7,2) {\textbf{values}};
    \node at (0,-0.3) {1};
    \node[left] at (3,-0.3) {$z$};
    \node[right] at (3,-0.3) {$z+1$};
    \node[left] at (6,-0.3) {$i-1$};
    \node[right] at (6,-0.3) {$i$};
    \node at (9,-0.3) {$n$};
    \node at (4.5,-0.7) {\textbf{positions}};
    \draw[thick, dotted] (3,1) -- (3,3);
    \draw[thick, dotted] (6,0) -- (6,2);
    \draw[thick, loosely dotted] (0.25,-0.3) -- (2.6,-0.3);
    \draw[thick, loosely dotted] (4.1,-0.3) -- (5,-0.3);
    \draw[thick, loosely dotted] (6.5,-0.3) -- (8.8,-0.3);
\end{tikzpicture}
\caption{Illustration of Proposition \ref{propSitesA}.}
\label{Fig_active_sites}
\end{figure}

Let $\mathfrak B = \mathfrak A \backslash \{ (0)^n \; : \; n \geqslant 0\}$ be the subset of non-constant sequences in $\mathfrak A$. Equivalently, $\mathfrak B$ is the set of inversion sequences which avoid 010 and 102, do not contain the value 1, and contain a value greater than 1. For all $n,z,s \geqslant 0$, let $\mathfrak B_n = \{\sigma \in \mathfrak B \; : \; |\sigma| = n\}$, let $\mathfrak B_{n,z} = \{\sigma \in \mathfrak B_n \; : \; |\Zeros(\sigma)| = z\}$, and let $\mathfrak B_{n,z,s} = \{\sigma \in \mathfrak B_{n,z} \; : \; |\Sites(\sigma)| = s\}$. Let also $\mathfrak b_n = |\mathfrak B_n|$, $\mathfrak b_{n,z} = |\mathfrak B_{n,z}|$, and $\mathfrak b_{n,z,s} = |\mathfrak B_{n,z,s}|$.

\begin{lem} \label{lem010+102}
Let $n,z,s \geqslant 0$. If $n \geqslant 3$, $z \in [2,n-1]$, and $s \in [2, n-z+1]$, then
\begin{align*}
    \mathfrak b_{n,z,s} &= \mathfrak b_{n-1,z-1,s}
    + \chi(z+s = n+1)
    + \chi(s=2) \cdot \left ( \sum_{z' \geqslant z} \mathfrak b_{n-1,z'} \right ) \\
    & + \left ( \sum_{k \geqslant 1} \mathfrak b_{n-k-1,z-1,s-k} \right )
    + \chi(s \geqslant 3) \cdot \left ( \sum_{n' \leqslant n-2} \sum_{s' \geqslant s} \mathfrak b_{n',z-1,s'} \right ).
\end{align*}
Otherwise, $\mathfrak b_{n,z,s} = 0$.
\end{lem}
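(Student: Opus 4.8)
The plan is to describe $\mathfrak B_{n,z,s}$ concretely and then partition it into five families, one per summand.

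First I would record the shape of an element of $\mathfrak B$. By Proposition~\ref{prop010}, the zeros of $\sigma\in\mathfrak B$ form an initial run, so $\sigma=(0)^z\cdot\mu$ where $\mu$ is a nonempty sequence whose entries are all at least $2$ (no entry is $1$, and $\sigma_2\in\{0,1\}$ forces $\sigma_2=0$, hence $z\geqslant 2$), and $\mu$ itself avoids $010$ and $102$. Applying Proposition~\ref{prop010} once more to $\push^{-1}(\sigma)\in\I(010,102)$ (legitimate by Proposition~\ref{propImgPush}), whose entries of value $1$ correspond to the entries of value $2$ of $\sigma$, these entries of value $2$ form a consecutive block. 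Since $|\mu|\geqslant 1$ we get $n=z+|\mu|$; an entry $\geqslant 2$ sits at a position $\geqslant 3$, so $n\geqslant 3$; and $z\leqslant n-1$. By Proposition~\ref{propSitesA}, $\Sites(\sigma)\subseteq[z+1,n+1]$ with $z+1$ and $n+1$ always active, so $s\in[2,n-z+1]$. Hence $\mathfrak B_{n,z,s}=\varnothing$ outside the stated range, which is the ``otherwise'' clause.

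Now fix $(n,z,s)$ in range and $\sigma=(0)^z\mu\in\mathfrak B_{n,z,s}$. I would partition $\mathfrak B_{n,z,s}$ by the shape of $\mu$: either \textbf{(1)} the value $2$ does not occur in $\mu$; or it does, in which case, writing $\mu=\alpha\cdot(2)^c\cdot\beta$ for the maximal (hence consecutive) block of $2$'s, so that $c\geqslant 1$ and all entries of $\alpha$ and $\beta$ are $\geqslant 3$, we are in exactly one of \textbf{(2)} $\alpha=\beta=\varepsilon$, \textbf{(3)} $\alpha=\varepsilon\neq\beta$, \textbf{(4)} $\alpha\neq\varepsilon=\beta$, \textbf{(5)} $\alpha\neq\varepsilon\neq\beta$. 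The claim is that these families have cardinalities equal, in this order, to the five summands. Family~(1) is exactly $\push(\mathfrak B_{n-1,z-1,s})$: $\push$ is injective, adds one leading zero, and keeps the relative order of the positive part, hence preserves the number of active sites by Proposition~\ref{propSitesA}; this gives $\mathfrak b_{n-1,z-1,s}$. Family~(2) is the single sequence $(0)^z(2)^{n-z}$, for which Proposition~\ref{propSitesA} makes every position of $[z+1,n+1]$ active (the minimum of a run of $2$'s is at least the maximum of a run of $2$'s), so $s=n-z+1$; this contributes $\chi(z+s=n+1)$.

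The last three families, where the two catalytic parameters change together, are where I expect the main difficulty: pinning down the exact summation ranges via Proposition~\ref{propSitesA}. In Family~(3), a $2$ lying left of a value $\geqslant 3$ destroys every active site strictly between them, so necessarily $s=2$; and $\sigma=(0)^z(2)^c\beta\mapsto(0)^{z-1+c}\cdot(\beta-1)$ is a bijection onto $\coprod_{z'\geqslant z}\mathfrak B_{n-1,z'}$ (the image has size $n-1$ and $z'=z-1+c$ zeros, and $c\geqslant 1$ is read back off $z'$), giving $\chi(s=2)\sum_{z'\geqslant z}\mathfrak b_{n-1,z'}$. In Family~(4), deleting the block $(2)^c$ lowers the number of active sites by exactly $c$ (it removes the $c$ sites at and inside the block while leaving those in the $\alpha$-part untouched), so $\sigma=(0)^z\alpha(2)^c\mapsto(0)^{z-1}\cdot(\alpha-1)$ is a bijection onto $\coprod_{k\geqslant 1}\mathfrak B_{n-k-1,z-1,s-k}$ with $k=c$, giving $\sum_{k\geqslant 1}\mathfrak b_{n-k-1,z-1,s-k}$. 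In Family~(5), $\sigma$ has exactly two active sites outside its $\alpha$-part, so $s\geqslant 3$ (hence the factor $\chi(s\geqslant 3)$), and $\sigma=(0)^z\alpha(2)^c\beta\mapsto(0)^{z-1}\cdot(\alpha-1)\cdot(\beta-1)$ is a bijection onto $\coprod_{n'\leqslant n-2}\coprod_{s'\geqslant s}\mathfrak B_{n',z-1,s'}$: the image has size $n'=n-c-1$ and $z-1$ zeros, and since the active sites of $\sigma$ in the $\alpha$-part become active sites of the image while $\beta$ may contribute further ones, one gets $s'\geqslant s$; the delicate point is reversibility, for which one checks that the cut separating $\alpha-1$ from $\beta-1$ in the positive part of the image is forced (it is the $(s-2)$-th active site strictly inside that part), and that $\sigma$ avoids $010$ (resp.\ $102$) if and only if $(\alpha-1)(\beta-1)$ does (resp.\ does and $\min\alpha\geqslant\max\beta$), so that the images genuinely lie in $\mathfrak B$. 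As the five families are pairwise disjoint and exhaust $\mathfrak B_{n,z,s}$, summing their cardinalities yields the recurrence.
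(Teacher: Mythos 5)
Your proof is correct and follows essentially the same route as the paper: the five families you describe (no $2$'s; only $2$'s; $2$'s adjacent to the zeros; $2$'s at the end; $2$'s in the middle) correspond exactly to the paper's cases 1, 2.1.1, 2.1.2, 2.2 and 2.3, which the paper phrases in terms of the block of $1$'s in $\push^{-1}(\sigma)$ rather than the block of $2$'s in $\sigma$, and your bijections agree with the paper's up to composition with $\push$. The points you flag as delicate (the count of active sites in families (4) and (5), and the forced position of the cut at the $(s-1)$-st active site in family (5)) are precisely the ones the paper verifies explicitly, and your treatment of them is sound.
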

\begin{proof}
    Let $\sigma \in \mathfrak B_{n,z,s}$. By definition, $\sigma$ is not constant, so we have $z < n$. By definition, $\sigma$ cannot contain the value 1, hence $\sigma$ begins by $(0,0)$, and $z \geqslant 2$. This implies that $n \geqslant 3$. By Proposition \ref{propSitesA}, $\Sites(\sigma) \subseteq [z+1,n+1]$, and $z+1$ and $n+1$ are always active sites of $\sigma$, so $s \in [2, n-z+1]$.

    Let $\sigma' = \push^{-1}(\sigma)$ be the sequence obtained by removing the first zero from $\sigma$, and subtracting 1 from every nonzero entry. In particular, $\sigma' \in \I_{n-1}(010,102)$.
    \begin{itemize}
        \item[1] If $\sigma'$ does not contain the value 1, then $\sigma' \in \mathfrak B_{n-1,z-1,s}$.
        \item[2] Otherwise, $\sigma'$ has entries of value 1, and they are all consecutive by Proposition \ref{prop010}.
        \begin{itemize}
            \item[2.1] If $\sigma'_z = 1$, then $\sigma'$ is of the form $(0)^{z-1} \cdot (1, \dots, 1) \cdot \omega$, where $\omega$ is a sequence of values strictly greater than 1, by Proposition \ref{prop010}.
            \begin{itemize}
                \item[2.1.1] If $\omega$ is empty, then $\sigma' = (0)^{z-1} \cdot (1)^{n-z}$, so $\sigma = (0)^{z} \cdot (2)^{n-z}$. In that case, $\Sites(\sigma) = [z+1,n+1]$, so $z+s = n+1$.
                \item[2.1.2] If $\omega$ is not empty, we have $\sigma_{z+1} = 2$ and $\sigma_n > 2$. This implies that inserting the value 1 in $\sigma$ at any position in $[z+2,n]$ would create an occurrence of 102. It follows that $\Sites(\sigma) = \{z+1,n+1\}$, so $s = 2$.
                
                Since the factor of ones of $\sigma'$ is consecutive to its factor of zeros, replacing all the ones by zeros yields a sequence $\tau = (0, \dots, 0) \cdot \omega$, which still avoids 010 and 102. Notice that $\tau$ does not contain the value 1, but contains a value greater than 1, so $\tau \in \mathfrak B_{n-1}$. By construction, the number $z'$ of zeros of $\tau$ is at least $z$.
            \end{itemize}
            \item[2.2] If $\sigma'_z > 1$ and $\sigma'_{n-1} = 1$, let $k$ be the number of entries of value $1$ in $\sigma'$, and let $\tau = (\sigma'_i)_{i \in [1,n-k-1]}$ be the sequence obtained by removing all entries of value 1 from $\sigma'$. In other words, $\sigma' = \tau \cdot (1)^k$, and $\tau = (0)^{z-1} \cdot \omega$, where $\omega$ is a nonempty sequence of values strictly greater than 1. Notice that $\sigma$ ends with $k$ entries of value $2$, and that inserting the value 1 after any of these entries would not create an occurrence of 102. These $k$ entries (or, equivalently, the rightmost $k$ active sites of $\sigma$) are no longer present $\tau$. This means that $\tau$ has $s-k$ actives sites, and so $\tau \in \mathfrak B_{n-k-1, z-1, s-k}$.
            \item[2.3] If $\sigma'_z > 1$ and $\sigma'_{n-1} > 1$, then $\sigma'$ is of the form $(0)^{z-1} \cdot \mu \cdot (1, \dots, 1) \cdot \nu$, where $\mu$ and $\nu$ are two nonempty sequences of values strictly greater than 1, which satisfy $\min(\mu) \geqslant \max(\nu)$ since $\sigma'$ avoids 102. Let $p = z+1+|\mu| \in [z+2, n-1]$ be the position of the leftmost entry of value 2 in $\sigma$. By Proposition \ref{propSitesA}, we have $\{z+1, p, n+1\} \subseteq \Sites(\sigma)$, so $s \geqslant 3$.

            Let $\tau = (0)^{z-1} \cdot \mu \cdot \nu$ be the sequence obtained by removing all entries of value 1 from $\sigma'$.
            Let $n' = |\tau|$, and $s' = |\Sites(\tau)|$, so that $\tau \in \mathfrak B_{n',z-1,s'}$. By construction, $n' \leqslant n-2$.
            Notice that $\Sites(\sigma) \subseteq [z+1,p] \sqcup \{n+1\}$, since inserting the value 1 at any position in $[p+1, n]$ would create an occurrence $(\sigma_{p}, 1, \sigma_n)$ of the pattern 102. This means that all active sites of $\sigma$ are still present in $\tau$, more precisely $\{i-1 \; : \; i \in \Sites(\sigma) \backslash \{n+1\}\} \sqcup \{n'+1\} \subseteq \Sites(\tau)$, so $s' \geqslant s$.
        \end{itemize}
    \end{itemize}

    To complete the proof, it remains to show that each of the five cases above induces a bijection between the set of sequences $\sigma$ and the set of sequences $\sigma'$ or $\tau$. We now describe the inverse bijection in each case.
    \begin{itemize}
        \item[1] Let $n \geqslant 3, z,s \geqslant 2$, $\sigma' \in \mathfrak B_{n-1, z-1, s}$, and $\sigma = \push(\sigma')$. Then $\sigma \in \mathfrak B_{n,z,s}$ and does not contain the value 1.
        \item[2.1.1] Let $n \geqslant 3, z \geqslant 2$, $\sigma' = (0)^{z-1} \cdot (1)^{n-z}$, and  $\sigma = \push(\sigma')$. Then $\sigma = (0)^{z} \cdot (2)^{n-z}$.
        \item[2.1.2] Let $n \geqslant 3, z \geqslant 2, z' \geqslant z$, and $\tau \in \mathfrak B_{n-1, z'}$. Let $\sigma'$ be the sequence obtained by replacing all but the first $z-1$ zeros of $\tau$ by ones, and let $\sigma = \push(\sigma')$. Note that the transformation from $\tau$ to $\sigma'$ cannot create any occurrence of 010 or 102, since there are no zeros to the right of the ones in $\sigma'$. It follows from Proposition \ref{propImgPush} that $\sigma$ still avoids 010 and 102. By construction, $\sigma_{z+1} = \sigma'_z+1 = 2$, and $\sigma_n = \tau_{n-1}+1 > 2$, hence inserting the value 1 in $\sigma$ at any position in $[z+2, n]$ would create an occurrence of 102. It follows that $z+1$ and $n+1$ are the only two active sites of $\sigma$, so $\sigma \in \mathfrak B_{n,z,2}$.
        \item[2.2] Let $n \geqslant 3, z, s \geqslant 2, k \geqslant 1$, and $\tau \in \mathfrak B_{n-k-1, z-1, s-k}$. Let $\sigma' = \tau \cdot (1)^k$, and let $\sigma = \push(\sigma')$. By Proposition \ref{propSitesA}, inserting the value 1 at the end of a sequence cannot create an occurrence of 010 or 102, so $\sigma$ still avoids both patterns. It can be seen that $\sigma$ ends with $k$ entries of value 2, and inserting the value 1 to the right of such an entry cannot create an occurrence of 102, hence all positions in $[n-k+2, n+1]$ are active sites of $\sigma$. Adding to that the $s-k$ active sites from $\tau$, $\sigma$ has a total of $s$ actives sites, so $\sigma \in \mathfrak B_{n,z,s}$.
        \item[2.3] Let $n, s \geqslant 3$, $z \geqslant 2$, $n' \leqslant n-2$, $s' \geqslant s$, and $\tau \in \mathfrak B_{n',z-1,s'}$. Let $p_1 < p_2 < \dots < p_{s'}$ be the active sites of $\tau$. Let $\sigma'$ be the sequence obtained by inserting $n-n'-1$ entries of value 1 at position $p_{s-1}$ in $\tau$, and let $\sigma = \push(\sigma')$. Inserting the value 1 in $\sigma$ at any position in $[p_{s-1}+2, n]$ would create an occurrence of 102, since $\sigma_{p_{s-1}+1} = 2$ and $\sigma_n > 2$. It follows that the active sites of $\sigma$ are $\{p_i + 1 \; : \; i \in [1,s-1]\} \sqcup \{n+1\}$, so $\sigma \in \mathfrak B_{n,z,s}$. \qedhere 
    \end{itemize}
\end{proof}

Let $F(t) = \sum_{n \geqslant 0} |\I_n(010,102)| t^n$ be the ordinary generating function of $\I(010,102)$. Let also $B(t,u,v) = \sum_{n,z,s \geqslant 0} \mathfrak b_{n,z,s} t^n u^z v^s$.
\begin{thm} \label{thmGF010_102}
The function $F(t)$ is algebraic with minimal polynomial
\begin{align*}
&(t^5 -3 t^4 + 4 t^3 - 3 t^2 + t)F(t)^3 + (4 t^4 - 8 t^3 + 6 t^2 - 2t)F(t)^2 \\ & + (-t^4 + 8 t^3 - 11t^2 + 6t - 1) F(t) -2 t^3 + 5 t^2 - 4t + 1.
\end{align*}
\end{thm}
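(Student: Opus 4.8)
The plan is to turn Lemma~\ref{lem010+102} into a functional equation for the three‑variable series $B(t,u,v)=\sum_{n,z,s}\mathfrak b_{n,z,s}t^nu^zv^s$, solve that equation by the kernel method applied twice (it has the two catalytic variables $u$ and $v$), and then convert the resulting algebraic equation for $B(t,1,1)$ into one for $F(t)$. The bridge back to $F$ is elementary: by Corollary~\ref{corolPushBijection} one has $\sum_{n\geqslant 0}|\I_n(010,102)|\,t^{n+1}=\sum_{m\geqslant 1}|\mathfrak A_m|\,t^m$, and since $\mathfrak A=\mathfrak B\sqcup\{(0)^n:n\geqslant 0\}$ with $\mathfrak B_n=\emptyset$ for $n\leqslant 2$, this reads $tF(t)=\frac{t}{1-t}+B(t,1,1)$, i.e. $F(t)=\frac1{1-t}+\frac{B(t,1,1)}{t}$. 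So it suffices to determine $B(t,1,1)$.

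\textbf{The functional equation.} Multiplying the recursion of Lemma~\ref{lem010+102} by $t^nu^zv^s$ and summing, each of its five terms contributes: the term $\mathfrak b_{n-1,z-1,s}$ gives $tu\,B(t,u,v)$; the term $\chi(z+s=n+1)$ gives an explicit rational function (a product of two geometric series in $tu$ and $tv$); the term $\sum_{k\geqslant 1}\mathfrak b_{n-k-1,z-1,s-k}$ gives, after summing the geometric series in $k$, a multiple of $\frac{1}{1-tv}$ times $B(t,u,v)$; and the two terms coming from cases~2.1.2 and~2.3, which carry the partial sums $\sum_{z'\geqslant z}$, $\sum_{n'\leqslant n-2}$, $\sum_{s'\geqslant s}$, turn into expressions of the form $\frac1{1-u}$, $\frac1{1-t}$, $\frac1{1-v}$ times combinations of $B(t,u,v)$, $B(t,u,1)$ and $B(t,1,1)$. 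The outcome is an equation of the shape
\[ K(t,u,v)\,B(t,u,v)=R(t,u,v)+P(t,u,v)\,B(t,u,1)+Q(t,u,v)\,B(t,1,1), \]
with $K,R,P,Q$ explicit rational functions; after clearing denominators, $K$ is a polynomial of degree $2$ in $v$. The delicate point at this stage is bookkeeping: one must check that the validity range $n\geqslant 3$, $z\in[2,n-1]$, $s\in[2,n-z+1]$ of the recursion introduces no missing low‑order corrections (easily verified on the first few coefficients, e.g. $\mathfrak B_3=\{(0,0,2)\}$), and that the apparent poles at $u=1$ and at $v=1$ are genuinely removable.

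\textbf{Kernel method, twice.} In the first step I regard the equation as determining $B(t,u,v)$ from $B(t,u,1)$ and $B(t,1,1)$, and substitute for $v$ the unique root $v=V(t,u)$ of $K(t,u,v)=0$ that is a formal power series in $t$ (the branch with $V(0,u)=1$, so that $B(t,u,V)$ is a well-defined series). This kills the $B(t,u,v)$ term and leaves a relation between $B(t,u,1)$ and $B(t,1,1)$, which I rewrite as $\widetilde K(t,u)\,B(t,u,1)=\widetilde R(t,u)+\widetilde Q(t,u)\,B(t,1,1)$, its coefficients now involving $V(t,u)$. In the second step I substitute for $u$ the appropriate power-series root $u=U(t)$ of $\widetilde K(t,u)=0$; this eliminates $B(t,u,1)$ and expresses $B(t,1,1)$ as an explicit rational function of $t$, $U(t)$ and $V(t,U(t))$.

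\textbf{Elimination and conclusion.} Together with the two kernel equations $K(t,U,V)=0$ and $\widetilde K(t,U)=0$, the formula just obtained for $B(t,1,1)$ forms a system from which the auxiliary algebraic functions $U$ and $V$ can be eliminated by two resultant computations, producing a polynomial satisfied by $B(t,1,1)$ over $\mathbb Q(t)$. Substituting $B(t,1,1)=tF(t)-\frac{t}{1-t}$ into it and clearing the factor $1-t$ should yield exactly the stated cubic, after checking that $F(t)$ is a root of it (e.g. by matching the first several coefficients against the known enumeration \cite{Testart2024}) and that the polynomial is irreducible, hence minimal. I expect the main obstacle to be this final elimination: controlling the two nested algebraic functions $U$ and $V$ and carrying out the resultants so that the degree collapses down to~$3$; the derivation of the functional equation itself is routine but error‑prone and must be done with care.
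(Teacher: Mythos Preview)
Your plan is correct and matches the paper's own proof essentially step for step: derive the functional equation in $B(t,u,v)$ from Lemma~\ref{lem010+102}, cancel first the kernel in $v$ and then the resulting kernel in $u$, and finish by eliminating the auxiliary algebraic functions $U$ and $V$ (the paper does this with Maple's \texttt{eliminate}). Your bridge $F(t)=\tfrac{1}{1-t}+\tfrac{1}{t}B(t,1,1)$ is exactly the paper's $F(t)=\tfrac1t\bigl(B(t,1,1)+\tfrac1{1-t}-1\bigr)$, and your anticipated difficulties (bookkeeping of low-order terms, the nested elimination) are precisely where the work lies.
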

\begin{proof}
By the definition of $\mathfrak B$, we have $|\mathfrak A_n| = \mathfrak b_n + 1$ for all $n \geqslant 0$, so it follows from Corollary \ref{corolPushBijection} that
$$F(t) = \frac{1}{t} \left (B(t,1,1) + \frac{1}{1-t} - 1\right ),$$
and we proceed by computing $B(t,1,1)$.

By Lemma \ref{lem010+102}, we have
\begin{align*}
    B(t,u,v) = \sum_{\substack{n \geqslant 3 \\ z, s \geqslant 2}} & \left [ \mathfrak b_{n-1,z-1,s}
    + \chi(z+s = n+1)
    + \chi(s=2) \cdot \left ( \sum_{z' \geqslant z} \mathfrak b_{n-1,z'} \right ) \right . \\
    & + \left . \left ( \sum_{k \geqslant 1} \mathfrak b_{n-k-1,z-1,s-k} \right )
    + \chi(s \geqslant 3) \cdot \left ( \sum_{n' \leqslant n-2} \sum_{s' \geqslant s} \mathfrak b_{n',z-1,s'} \right ) \right ] t^n u^z v^s.
\end{align*}
We now simplify each of the five terms in this expression to obtain a functional equation characterizing $B(t,u,v)$.
$$\sum_{\substack{n \geqslant 3 \\ z, s \geqslant 2}} \mathfrak b_{n-1,z-1,s} t^n u^z v^s = tu B(t,u,v)$$
$$
    \sum_{\substack{n \geqslant 3 \\ z, s \geqslant 2}} \chi(z+s = n+1) t^n u^z v^s 
    = \sum_{z,s \geqslant 2} t^{-1}(tu)^z (tv)^s
    = \frac{t^3 u^2 v^2}{(1-tu)(1-tv)}
$$
\begin{align*}
    \sum_{\substack{n \geqslant 3 \\ z, s \geqslant 2}} \chi(s=2) \cdot \left ( \sum_{z' \geqslant z} \mathfrak b_{n-1,z'} \right ) t^n u^z v^s 
    &= \sum_{\substack{n \geqslant 2 \\ z \geqslant 2}} \sum_{i \geqslant 0} \mathfrak b_{n,z+i} t^{n+1} u^z v^2 \\
    &= \sum_{\substack{n \geqslant 2 \\ z \geqslant 2}} \sum_{i \in [0, z-2]} \mathfrak b_{n,z} t^{n+1} u^{z-i} v^2 \\
    &= t v^2 \sum_{\substack{n \geqslant 2 \\ z \geqslant 2}} \mathfrak b_{n,z} t^n \cdot \frac{u^2-u^{z+1}}{1-u} \\
    &= t v^2 \left ( \frac{u^2 B(t,1,1) - uB(t,u,1)}{1-u} \right )
\end{align*}
\begin{align*}
    \sum_{\substack{n \geqslant 3 \\ z, s \geqslant 2}} \left ( \sum_{k \geqslant 1} \mathfrak b_{n-k-1,z-1,s-k} \right ) t^n u^z v^s 
    &= \sum_{\substack{z \geqslant 1 \\ n,s \geqslant 2}} \sum_{k \geqslant 1} \mathfrak b_{n,z,s} t^{n+k+1} u^{z+1} v^{s+k} \\
    &= \sum_{\substack{z \geqslant 1 \\ n,s \geqslant 2}}\mathfrak b_{n,z,s} t^{n+1} u^{z+1} v^s \sum_{k \geqslant 1} (tv)^k \\
    &= \sum_{\substack{z \geqslant 1 \\ n,s \geqslant 2}}\mathfrak b_{n,z,s} t^{n+1} u^{z+1} v^s \cdot \frac{tv}{1-tv} \\
    &= t^2uv \left ( \frac{B(t,u,v)}{1-tv} \right )
\end{align*}
\begin{align*}
    \sum_{\substack{n \geqslant 3 \\ z, s \geqslant 2}} \chi(s \geqslant 3) \cdot \left ( \sum_{n' \leqslant n-2} \sum_{s' \geqslant s} \mathfrak b_{n',z-1,s'} \right ) t^n u^z v^s 
    & = \sum_{\substack{n, s \geqslant 3 \\ z \geqslant 1}} \sum_{\substack{i \geqslant 2 \\ j \geqslant 0}} \mathfrak b_{n-i,z,s + j} t^n u^{z+1} v^s \\
    & = \sum_{\substack{n, s \geqslant 3 \\ z \geqslant 1}} \mathfrak b_{n,z,s} \sum_{\substack{i \geqslant 2 \\ j \in [0, s-3]}}  t^{n+i} u^{z+1} v^{s-j} \\
    & = \sum_{\substack{n, s \geqslant 3 \\ z \geqslant 1}} \mathfrak b_{n,z,s} t^n u^{z+1} \cdot \frac{t^2}{1-t} \cdot \frac{v^3-v^{s+1}}{1-v} \\
    & = t^2 u \left (\frac{v^3 B(t,u,1) - vB(t,u,v)}{(1-t)(1-v)} \right )
\end{align*}
In summary, the formula of Lemma \ref{lem010+102} is equivalent to the functional equation
\begin{align*}
    B(t,u,v) &= tu B(t,u,v) + \frac{t^3 u^2 v^2}{(1-tu)(1-tv)} + t v^2 \left ( \frac{u^2 B(t,1,1) - uB(t,u,1)}{1-u} \right )\\
    &+ t^2uv \left ( \frac{B(t,u,v)}{1-tv} \right ) + t^2 u \left (\frac{v^3 B(t,u,1) - vB(t,u,v)}{(1-t)(1-v)} \right ).
\end{align*}
This can be rewritten as
\begin{equation} \label{eqBKernel1}
    K_1(t,u,v) B(t,u,v) = K_2(t,u,v) B(t,u,1) + \frac{t u^2 v^2}{1-u} B(t,1,1) + \frac{t^3 u^2 v^2}{(1-tu)(1-tv)},
\end{equation}
where $K_1$ and $K_2$ are rational functions of $(t,u,v)$:
\begin{align*}
    K_1(t,u,v) &= \frac{- t^3 u v^2 - t^2 v^2 + t^2 u + t^2 v + t u v + t v^2 - t u - t - v + 1}{(1-t) (1-v) (1- tv)}, \\[\medskipamount]
    K_2(t,u,v) &= \frac{tuv^2 (-tuv + t + v - 1)}{(1-t)(1-u)(1-v)}.
\end{align*}
We apply the kernel method. The kernel in \eqref{eqBKernel1} is $K_1(t,u,v)$. This kernel can be cancelled by substituting $v$ for a formal power series $V(t,u)$ which is algebraic with minimal polynomial
$$(t^3 u + t^2 - t) V(t,u)^2 + (- t^2 - tu + 1) V(t,u) - t^2 u + t u + t - 1.$$
An expression for $V(t,u)$ in terms of radicals is
$$V(t,u) = \frac{t^{2}+t u-1+\sqrt{4 t^{5} u^{2}-4 t^{4} u^{2}+t^{4}-2 t^{3} u+t^{2} u^{2}-4 t^{3}+4 t^{2} u+6 t^{2}-2 t u-4 t+1}}{2 t (t^{2} u+t-1)}.$$
Substituting $v$ for $V(t,u)$ in \eqref{eqBKernel1} yields the equation
\begin{equation} \label{eqBKernel2}
K_2(t,u,V(t,u)) B(t,u,1) = - \frac{t u^2 V(t,u)^2}{1-u} B(t,1,1) - \frac{t^3 u^2 V(t,u)^2}{(1-tu)(1-t V(t,u))}.
\end{equation}
Once again, we use the kernel method. The kernel in \eqref{eqBKernel2} is $K_2(t,u,V(t,u))$, and it can be cancelled by substituting $u$ for a formal power series $U(t)$ which is algebraic with minimal polynomial
$$t^2 U(t)^3 - 2 t U(t)^2 + (- t^3 + 2 t^2 + 1) U(t) - t^2 + t - 1.$$
Substituting $u$ for $U(t)$ in \eqref{eqBKernel2}, we obtain
$$B(t,1,1) = - \frac{(1-U(t)) t^3 U(t)^2 V(t,U(t))^2}{ t U(t)^2 V(t,U(t))^2 (1-tU(t))(1-tV(t,U(t)))}.$$
From this equation, and the minimal polynomials of $U$ and $V$, we can compute the minimal polynomial of $B(t,1,1)$ using elimination\footnote{We use the \emph{eliminate} command in Maple.}:
$$(t^3 - 2t^2 + 2t - 1) B(t,1,1)^3 + (t^3 - t^2 - t)B(t,1,1)^2 + (-t^4 + 2t^3 - 4t^2 + t)B(t,1,1) - t^4.$$
Now that we have the minimal polynomial of $B(t,1,1)$, we use elimination again to compute that of ${F(t) = \frac{1}{t} \left (B(t,1,1) + \frac{1}{1-t} - 1\right )}$:
\begin{align*}
&(t^5 -3 t^4 + 4 t^3 - 3 t^2 + t)F(t)^3 + (4 t^4 - 8 t^3 + 6 t^2 - 2t)F(t)^2 \\ & + (-t^4 + 8 t^3 - 11t^2 + 6t - 1) F(t) -2 t^3 + 5 t^2 - 4t + 1. \qedhere
\end{align*}
\end{proof}

\section{A broader perspective on our construction} \label{secConclusion}
In this section, we do a quick survey of the succession rules describing the generating tree growing on the left for inversion sequences avoiding a pattern of length 3. Our goal is mainly to show that the generating tree growing on the left can be used in most of these cases, rather than to provide new enumerative results like in Sections \ref{201+210} and \ref{010+102}. Consequently, we do not prove the correctness of the succession rules presented in this section. None of these rules are ``better" than previously known constructions, but many are similar (in terms of how many distinct labels they require).

We know from Propositions \ref{PropNecessaryPatterns} and \ref{PropSufficientPatterns} that the generating tree growing on the left can be applied to 10 of the 13 classes of inversion sequences avoiding one pattern of length 3 (that is, for every pattern except 000, 010, and 100).
On page \pageref{Table_Rules}, we present succession rules describing the generating tree growing on the left for 9 of these 10 classes, the exception being the class $\I(120)$. The letters $z$, $p$, and $s$ used in these succession rules correspond to the following statistics on inversion sequences:
\begin{itemize}
    \item $z$ is the number of zeros,
    \item $p$ is the number of consecutive zeros appearing as a prefix, i.e. at the beginning of an inversion sequence,
    \item $s$ is the number of consecutive zeros appearing as a suffix, i.e. at the end of an inversion sequence.
\end{itemize}
Additionally, some cases require distinguishing whether a sequence is constant. In that case, we mark labels corresponding to constant sequences with the letter $\mathtt A$, and non-constant sequences with the letter $\mathtt B$.

\begin{landscape}
\begin{table}
\begin{center}
    \setlength{\tabcolsep}{3pt}
    \renewcommand{\arraystretch}{1.25}
    \small{
    \begin{tabular}{|c|c|c|c|}
    \hline
    Pattern & Succession rule & Comment & OEIS \cite{OEIS} \\
    \hline
    001 &
    $\begin{array}{rclll}
    (\mathtt A,0) \\
    (\mathtt A, z) & \leadsto & (\mathtt A, z+1) \\
    && (\mathtt B, i) & \text{for} & i \in [1,z] \\
    (\mathtt B, z) & \leadsto & (\mathtt B,i) & \text{for} & i \in [1,z]
\end{array}$
    & \begin{minipage}{150pt}
        \strut
        Isomorphic to the generating tree growing on the right. It is easy to find simpler constructions for this class.
        \strut
    \end{minipage}
    & \begin{tabular}{c} A011782 \\ (powers \\ of 2) \end{tabular} \\
    \hline
    011 &
    $\begin{array}{rclll}
        (0) \\
        (z) & \leadsto & (z)^z, \, (z+1)
    \end{array}$
    & \begin{minipage}{150pt}
        \strut
        Isomorphic to the generating tree growing on the right used in \cite{Corteel_Martinez_Savage_Weselcouch_2016}.
        \strut
    \end{minipage}
    & \begin{tabular}{c} A000110 \\ (Bell \\ numbers) \end{tabular} \\
    \hline
    012 &
    $\begin{array}{rclll}
        (\mathtt A,0) \\
        (\mathtt A,z) & \leadsto & (\mathtt A,z+1) \\
        && (\mathtt B,i)^{2^{z-1-i}} & \text{for} & i \in [0,z-1]\\
        (\mathtt B,s) & \leadsto & (\mathtt B,s) \\
        && (\mathtt B, i)^{2^{s-1-i}} & \text{for} & i \in [0,s-1]
    \end{array}$
    & \begin{minipage}{150pt}
        \strut
        The construction of \cite{Corteel_Martinez_Savage_Weselcouch_2016} is much simpler (it can be seen as a generating tree with only two distinct labels).
        \strut
    \end{minipage}

    & \begin{tabular}{c} A001519 \\ (odd-index \\ Fibonacci \\ numbers) \end{tabular}\\
    \hline
    021 &
    $\begin{array}{rclll}
        (0) \\
        (p) & \leadsto & (p+1) \\
        && (i)^{2^{p-i}}  & \text{for} & i \in [1,p]
    \end{array}$
    & \begin{minipage}{150pt}
        \strut
        See \cite{Corteel_Martinez_Savage_Weselcouch_2016} for two bijections between $\I(021)$ and Schröder paths, and a bijection with some binary trees. See also \cite{Lin_Kim_2018} for a bijection with two-colored Dyck paths.
        \strut
    \end{minipage}
    & \begin{tabular}{c} A155069 \\ (large \\ Schröder \\ numbers) \end{tabular} \\
    \hline
    $\begin{array}{c}
         101 \\ 110
    \end{array}$
    & $\begin{array}{rclll}
        (0) \\
        (z) & \leadsto & (z+1) \\
        && (i)^{i}  & \text{for} & i \in [1,z]
    \end{array}$
    & \begin{minipage}{150pt}
        \strut
        This construction is used in \cite{Corteel_Martinez_Savage_Weselcouch_2016} and \cite{Beaton_Bouvel_Guerrini_Rinaldi_2019}.
        \strut
    \end{minipage}
    & A113227 \\
    \hline
    102
    & $\begin{array}{l} \begin{array}{rclll}
        (0,0) \\
        (z,z)  & \leadsto & (z+1,z+1)\\
        && (i,j)^{\binom{z-i-1}{z-j}} & \hspace{23.5pt} \text{for} & i \in [0,z-1], \, j \in [i+1,z]
    \end{array} \\
    \text{if} \; s \neq z : \\
    \begin{array}{rclll}
        (s,z) & \leadsto & (s,j) & \text{for} & j \in [s+1, z+1] \\ 
        && (i,j)^{\sum_{k=s-j}^{z-j}\binom{s-i-1}{k}} & \text{for} & i \in [0,s-1], \, j \in [i+1, z]
    \end{array} \end{array}$
    & \begin{minipage}{150pt}
        \strut
        A different construction with two parameters is used in \cite{Mansour_Shattuck_2015} to compute the generating function of $\I(102)$.
        \strut
    \end{minipage}
    & A200753 \\
    \hline
    $\begin{array}{c}
         201 \\ 210
    \end{array}$
    & $\begin{array}{rclll}
        (0,0) \\
        (p,z) & \leadsto & (p+1,j) & \text{for} & j \in [p+1,z+1] \\
        && (i,j)^{\sum_{k = p-j}^{z-j} \binom{p-i}{k}}  & \text{for} & i \in [1,p], \, j \in [i,z]
    \end{array}$
    & \begin{minipage}{150pt}
        \strut
        Two other constructions for this class, involving two parameters each, can be found in \cite{Corteel_Martinez_Savage_Weselcouch_2016} and \cite{Mansour_Shattuck_2015}.
        \strut
    \end{minipage}
    & A263777 \\
    \hline
    \end{tabular}}
\end{center}
\label{Table_Rules}
\end{table}
\end{landscape}

For inversion sequences avoiding the patterns 101, 102, or 110, the generating tree growing on the left can be described with fewer distinct labels than the more naive generating tree growing on the right. For the other 7 patterns (including 120), they require a similar number of labels at each level.

The class $\I(120)$ is not included in the table of page \pageref{Table_Rules}, because the generating tree growing on the left for $\I(120)$ cannot be described by a succession rule involving only one or two parameters as in the other cases. We can label each inversion sequence by the sequence of lengths of its (maximal) factors of zeros, and obtain the unpalatable succession rule below.
$$\Omega_{120} =\left \{ \begin{array}{rclll}
    (0) \\
    (\ell_0, \dots,  \ell_k) & \leadsto & (q_0, q_1, \dots, q_r)^{\binom{\ell_0+1-\sum_{j=0}^r q_j}{r}} \\
    && \text{for} \quad r \geqslant 0, \, q_0, \dots, q_r \in [1, \ell_0+1] \\[4pt]
    && (\ell_0+1, \ell_1, \dots, \ell_{i-1}, q_0, \dots, q_r)^{\binom{\ell_i+1-\sum_{j=0}^r q_j}{r+1}} \\ 
    && \text{for} \quad i \in [1,k], \, r \geqslant 0, \, q_0, \dots, q_r \in [1, \ell_i]
\end{array} \right .$$

The number of distinct labels appearing at level $n$ of the tree described by the rule $\Omega_{120}$ is exponential in $n$. In this case, the approach of \cite{Mansour_Shattuck_2015} is much better since it can be used to compute the enumeration sequence of $\I(120)$ in polynomial time (this is sequence A263778 in \cite{OEIS}). We present some more serviceable succession rules for 120-avoiding inversion sequences ($\Omega'_{120}$ and $\Omega''_{120}$) towards the end of this section.

We now show that even for the classes $\I(000)$, $\I(010)$, and $\I(100)$, it is possible to modify the generating tree growing on the left to obtain some succession rules, in which some of the labels correspond to objects that are not counted (such objects are referred to as ``phantom objects" in \cite{Testart2024}). The enumeration sequences of these three classes can be found in \cite{OEIS} as sequences A000111, A263780, and A263779.

For inversion sequences avoiding the pattern 000, we can use the generating tree growing on the left by allowing sequences to contain more than two zeros. In other words, only occurrences of the pattern 000 involving nonzero values of an inversion sequence are forbidden. This yields a simple succession rule in which sequences are labelled by their number of zeros $z$.
$$\Omega_{000} = \left \{ \begin{array}{rclll}
    (0) \\
    (z) & \leadsto & (z-1)^{\binom{z}{2}}, (z)^z, (z+1)
\end{array} \right .$$
This rule can be used to enumerate 000-avoiding inversion sequences, by only counting nodes of the tree labelled $(0)$, $(1)$, or $(2)$. Note that nodes labelled $(z)$ for $z \geqslant 3$ are still a necessary part of this construction, because some of their descendants have a label whose value is less than 3.
This approach is easy to generalize to inversion sequences avoiding a constant pattern of any length $k \geqslant 2$. We obtain the following succession rule, where only nodes labelled $(z)$ for $z < k$ are to be counted.
$$\Omega_{0^k} = \left \{ \begin{array}{rclll}
    (0) \\
    (z) & \leadsto & (z+1-i)^{\binom{z}{i}} & \text{for} & i \in [0, k-1]
\end{array} \right .$$
Inversion sequences avoiding a constant pattern of any length have been studied before, and their exponential generating function is given in \cite{Hong_Li_2022}. We have not tried to compute generating functions from the rule $\Omega_{0^k}$.

Similarly to the 000 case, we can define the generating tree growing on the left for inversion sequences avoiding occurrences of the pattern 100 which use only nonzero values. This tree can be labelled by pairs of integers $(p, z)$, where $p$ is the length of the prefix of zeros, and $z$ is the number of zeros of the corresponding inversion sequence. It is described by the succession rule below.
$$\Omega_{100} = \left \{\begin{array}{rclll}
    (0,0) \\
    (p,z) & \leadsto & (p+1,z+1) \\
    && (p+1,z)^{z-p} \\
    && (i,j)^{\binom{p-i}{z-j}} & \text{for} & i \in [1,p], \, j \in [i,z] \\
    && (i,j)^{\binom{p-i}{z-j-1}(z-p)} & \text{for} & i \in [1,p], \, j \in [i,z-1]
\end{array} \right .$$
In this tree, inversion sequences which avoid 100 are those that have at most one zero outside of their prefix, and they correspond to nodes labelled $(p, z)$ for $z-p \in \{0,1\}$.

More generally, if $\rho$ is a pattern such that the generating tree growing on the left does not restrict to a tree for $\I(\rho)$ (such patterns are characterized in Proposition \ref{PropSufficientPatterns}), it is still possible to define the generating tree growing on the left for inversion sequences avoiding occurrences of $\rho$ which use only nonzero values. However, in order to enumerate $\I(\rho)$ from such a tree, one must be able to determine which nodes correspond to inversion sequences containing $\rho$, as they must not be counted. Consequently, the labels may need to encode more information than the strict minimum necessary to describe the shape of the tree.

By applying this idea to the pattern 010, we can obtain a generating tree having similar labels to those of the rule $\Omega_{120}$ (to be precise, each inversion sequence is labelled by the number of zeros in its prefix, and the multiset of lengths of the other factors of zeros). The number of distinct labels at level $n$ of this tree is exponential in $\sqrt n$, so this does not provide an efficient way of computing $|\I_n(010)|$. Nevertheless, it is possible to improve this construction to obtain a tree which only has a quadratic number of distinct labels at level $n$, by using the idea of \emph{commitments} from \cite{Pantone201+210}. That tree is described by the succession rule below, where the terms in brackets are unsigned Stirling numbers of the first kind, and only nodes labelled $(p,0)$ must be counted to enumerate $\I(010)$.
$$\Omega_{010} = \left \{\begin{array}{rclll}
    (0,0) \\
    (p,c) & \leadsto & (p+1,c) \\
    && (p+1,c-1) & \text{if} & c > 0 \\
    && (p+1-\ell,c+k)^{\binom{c+k}{k} \Ustirling{\ell}{\ell-k}} & \text{for} & 0 \leqslant k < \ell \leqslant p
\end{array} \right .$$
With this rule, the enumeration sequence of $\I(010)$ can be computed with a similar efficiency to the method previously used in \cite{Testart2024}.

Lastly, we remark that the same approach using commitments also yields a simpler succession rule $\Omega'_{120}$ for inversion sequences avoiding 120.
$$\Omega'_{120} = \left \{\begin{array}{rcll}
    (0,0) \\
    (p,c) & \leadsto & (p+1,c) \\
    && (p+1,c-1) & \text{if} \quad c > 0 \\
    && (p+1-\ell,k)^{a_{\ell, k}} & \text{if} \quad c = 0, \quad\text{for} \quad 0 \leqslant k < \ell \leqslant p
\end{array} \right .$$
The numbers $(a_{\ell,k})_{\ell,k \geqslant 0}$ appearing in the rule $\Omega'_{120}$ count some constrained 120-avoiding words, and their generating function is
$$\sum_{\ell,k \geqslant 0} a_{\ell,k} t^\ell u^k = \frac{2t - 1 +\sqrt{4 t^2 u + 4t^2 - 4tu - 4t + 1}}{2 u (t-1)}.$$
The rule $\Omega'_{120}$ has a nicer form than $\Omega_{010}$, making it possible to skip the commitments. This yields the rule $\Omega''_{120}$ below, which only has one parameter, although it requires allowing arbitrarily long jumps. As a consequence, some nodes of the corresponding ``tree" have an infinite number of children. This construction of $\I(120)$ is actually quite similar\footnote{The construction of $\I(120)$ from \cite{Mansour_Shattuck_2015} works by splitting an inversion sequence into factors starting at each left-to-right maximum, and building each factor from left to right. Here, we essentially build these same factors from right to left.} to the one employed in \cite{Mansour_Shattuck_2015}.
$$\Omega''_{120} = \left \{\begin{array}{rcll}
    (0) \\
    (p) & \overset{1} \leadsto & (p+1) \\
    & \overset{k} \leadsto& (p+k-\ell)^{b_{\ell,k}} & \text{for} \quad 1 \leqslant \ell \leqslant p, \quad k \geq 1
\end{array} \right .$$
The numbers $(b_{\ell,k})_{\ell,k \geqslant 0}$ appearing in the rule $\Omega''_{120}$ count another family of constrained 120-avoiding words, and their generating function is
$$\sum_{\ell,k \geqslant 0} b_{\ell,k} t^\ell u^k = \frac{-2tu + 2t + u -1 + \sqrt{-4 t^2 u + 4t^2 + 4tu + u^2 - 4t -2u + 1}}{2(t-1)}.$$

In conclusion, up to allowing some small modifications, a succession rule involving at most two integer parameters can be found to describe a kind of generating tree growing on the left for every class of inversion sequences avoiding a pattern of length 3. This shows that the construction is quite robust, and it could be further explored in a larger setting; for instance, to study longer patterns, sets of patterns, or different definitions of patterns. We believe this construction could also be applied to \emph{restricted growth functions} \cite{RGF_patterns_2018}, since they form a subtree of the generating tree growing on the left for inversion sequences: the parent of a (nonempty) restricted growth function is always a restricted growth function. However, this does not hold for \emph{ascent sequences} \cite{ascent_sequences_2010}, another interesting subset of inversion sequences.

\acknowledgements
We thank Mathilde Bouvel, Emmanuel Jeandel, and the anonymous referees for their comments which improved the presentation of this work.

\printbibliography

@article{Corteel_Martinez_Savage_Weselcouch_2016,
    title={Patterns in inversion sequences {I}},
    volume={18},
    number={2},
    journal={Discrete Mathematics and Theoretical Computer Science},
    author={Corteel, Sylvie and Martinez, Megan A. and Savage, Carla D. and Weselcouch, Michael},
    year={2016}}

@article{Mansour_Shattuck_2015,
    title = {Pattern avoidance in inversion sequences},
    volume = {25},
    number = {2},
    journal = {Pure Mathematics and Applications},
    author = {Mansour, Toufik and Shattuck, Mark},
    year = {2015},
    pages = {157–176}}

@misc{OEIS,
    title = {The On-Line Encyclopedia of Integer Sequences},
    author = {OEIS Foundation Inc.},
    label = {OEIS},
    year = {2024},
    note = {Published electronically at https://oeis.org.}}

@article{West1,
    title={Generating trees and the {C}atalan and {S}chröder numbers},
    journal={Discrete Mathematics},
    volume={146},
    pages={247-262},
    author={West, Julian},
    year={1995}}

@article{West2,
    title={Generating trees and forbidden subsequences},
    journal={Discrete Mathematics},
    volume={157},
    pages={363--374},
    author={West, Julian},
    year={1996}}

@article{Pantone201+210,
    author = {Pantone, Jay},
    title = {The enumeration of inversion sequences avoiding the patterns 201 and 210},
    journal = {Enumerative Combinatorics and Applications},
    volume = {4},
    year = {2024},
    number = {4},
    pages = {Article \#S2R25}}

@article{Chen_Lin,
    title = {A bijection for length-5 patterns in permutations},
    journal = {Journal of Combinatorial Theory, Series A},
    volume = {202},
    pages = {Article 105815},
    year = {2024},
    author = {Joanna N. Chen and Zhicong Lin}}

@misc{Testart2024,
    author = {Benjamin Testart},
    title = {Completing the enumeration of inversion sequences avoiding one or two patterns of length 3},
    eprint={2407.07701},
    archivePrefix={arXiv},
    year = {2024}}

@article{Lin_Fu_2021,
    author = {Zhicong Lin and Shishuo Fu},
    title = {On $\underline{12}0$-avoiding inversion and ascent sequences},
    journal = {European Journal of Combinatorics},
    volume = {93},
    year = {2021},
    pages = {Article 103282}}

@article{GFforGT,
    label={BBD+},
    title={Generating functions for generating trees},
    author={Cyril Banderier and Mireille Bousquet-Mélou and Alain Denise and Philippe Flajolet and Danièle Gardy and Dominique Gouyou-Beauchamps},
    journal={Discrete Mathematics},
    volume={246 (1-3)},
    pages={29--55},
    year={2002}}

@misc{guessfunc,
    author = {Jay Pantone},
    howpublished = {\url{https://github.com/jaypantone/guessfunc}},
    note = {\\DOI: \url{https://doi.org/10.5281/zenodo.5810636}},
    title = {{guessfunc}: A {M}aple library to guess the generating function of a counting sequence given some initial terms},
    year = {2021}}

@article{Martinez_Savage_2018,
    title={Patterns in inversion sequences {II}: inversion sequences avoiding triples of relations},
    volume={21},
    number={18.2.2},
    journal={{Journal of Integer Sequences}},
    author={Martinez, Megan A. and Savage, Carla D.},
    year={2018}
}

@article{Yan_Lin_2020,
    title={Inversion sequences avoiding pairs of patterns},
    volume={22},
    number={1},
    journal={Discrete Mathematics and Theoretical Computer Science},
    author={Yan, Chunyan and Lin, Zhicong},
    year={2020}
}

@article{Kotsireas_Mansour_Yildirim_2024,
    title={An algorithmic approach based on generating trees for enumerating pattern-avoiding inversion sequences},
    journal={Journal of Symbolic Computation},
    volume={120},
    pages={Article 102231},
    author={Kotsireas, Ilias and Mansour, Toufik and Yıldırım, Gökhan},
    year={2024}}

@article{Beaton_Bouvel_Guerrini_Rinaldi_2019,
    title = {Enumerating five families of pattern-avoiding inversion sequences; and introducing the powered {C}atalan numbers},
    journal = {Theoretical Computer Science},
    volume = {777},
    pages = {69-92},
    year = {2019},
    author = {Beaton, Nicholas R. and Bouvel, Mathilde and Guerrini, Veronica and Rinaldi, Simone}}

@article{JumpingGT,
    title={Jumping succession rules and their generating functions},
    journal={Discrete Mathematics},
    volume={271},
    pages={29--50},
    author={Ferrari, Luca and Pergola, Elisa and Pinzani, Renzo and Rinaldi, Simone},
    year={2003}}

@article{Hong_Li_2022,
    title={Length-four pattern avoidance in inversion sequences},
    journal={Electronic Journal of Combinatorics},
    volume={29},
    pages={\#P4.37},
    author={Hong, Letong and Li, Rupert},
    year={2022}}

@article{Lin_Kim_2018,
    author = {Zhicong Lin and Dongsu Kim},
    title = {A sextuple equidistribution arising in pattern avoidance},
    journal = {Journal of Combinatorial Theory, Series A},
    volume = {155},
    pages = {267--286},
    year = {2018}}

@article{RGF_patterns_2018,
    label = {CDD+},
    author = {Lindsey R. Campbell and Samantha Dahlberg and Robert Dorward and Jonathan Gerhard and Thomas Grubb and Carlin Purcell and Bruce E. Sagan},
    title = {Restricted growth function patterns and statistics},
    journal = {Advances in Applied Mathematics},
    volume = {100},
    pages = {1--42},
    year = {2018}}

@article{ascent_sequences_2010,
    author = {Mireille Bousquet-Mélou and Anders Claesson and Mark Dukes and Sergey Kitaev},
    title = {(2+2)-free posets, ascent sequences and pattern avoiding permutations},
    journal = {Journal of Combinatorial Theory, Series A},
    volume = {117},
    pages = {884--909},
    year = {2010}}

\end{document}